\documentclass[10pt]{amsart}%
\usepackage{amsmath}
\usepackage{amsfonts}
\usepackage{amsthm}
\usepackage{amssymb}
\usepackage{graphicx}%
\setcounter{MaxMatrixCols}{30}
\newtheorem{theorem}{Theorem}
\newtheorem*{theorem*}{Theorem}
\newtheorem*{acknowledgement*}{Acknowledgement}
\newtheorem*{definition*}{Definition}

\newtheorem{corollary}[theorem]{Corollary}

\newtheorem{lemma}[theorem]{Lemma}

\newtheorem{proposition}[theorem]{Proposition}
\newtheorem{remark}[theorem]{Remark}

\newcommand{\RR}[1]{{\mathbb{R}}^{#1}}
\newcommand{\pd}[2]{\frac{\partial #1}{\partial#2}}

\newcommand{\pdt}[0]{\frac{\partial}{\partial t}}

\newcommand{\mc}[1]{{\mathcal{#1}}}
\newcommand{\gt}[0]{\tilde{g}}

\newcommand{\Gam}[0]{\Gamma}
\newcommand{\Gamt}[0]{\widetilde{\Gamma}}

\newcommand{\Rc}[0]{\operatorname{Rc}}
\newcommand{\Rm}[0]{\operatorname{Rm}}

\newcommand{\Rmt}[0]{\widetilde{\operatorname{Rm}}}
\newcommand{\Rct}[0]{\widetilde{\Rc}}

\newcommand{\dfn}[0]{\doteqdot}

\newcommand{\wt}[1]{\widetilde{#1}}
\newcommand{\Rt}[0]{\wt{R}}
\newcommand{\nabt}[0]{\wt{\nabla}}

\title[An energy approach to uniqueness for the Ricci flow]
{An energy approach to the problem of uniqueness for the Ricci flow}

\author{Brett Kotschwar}
\address{Arizona State University, Tempe, AZ, USA}
\email{kotschwar@asu.edu}

\thanks{The author was supported in part by NSF grant DMS-0805834/DMS-1160613.}

\date{May 2012}

\begin{document}

\begin{abstract}
  We revisit the problem of uniqueness for the Ricci flow
  and give a short, direct proof, based on the consideration of a simple energy quantity,
  of Hamilton/Chen-Zhu's theorem on the uniqueness of complete solutions of uniformly bounded curvature.
  With a variation of this quantity and technique, we further prove a uniqueness theorem 
  for subsolutions to a general class of mixed differential inequalities which implies
  an extension of Chen-Zhu's result to solutions (and initial data) of
  potentially unbounded curvature.  
\end{abstract}
\maketitle

\keywords{}

\maketitle

Let $M = M^n$ be a smooth manifold and $g_0$ a Riemannian metric on $M$. In this paper, we revisit the question of uniqueness
of solutions to the
initial value problem 
\begin{equation}\label{eq:rf}
    \pdt g(t) = -2\Rc(g(t)),\quad
         g(0) = g_0,
\end{equation}
associated to the Ricci flow on $M$. The broadest category in which uniqueness is currently 
known to hold without dimensional restrictions is that of complete solutions of uniformly bounded curvature. 
\begin{theorem}[Hamilton \cite{Hamilton3D}; Chen-Zhu \cite{ChenZhu}]\label{thm:rfuniqueness}
Suppose $g_0$ is a complete metric and $g(t)$ and $\gt(t)$ are solutions to the initial value problem \eqref{eq:rf} 
satisfying
  \[
\sup_{M\times[0, T]}|\Rm|_{g(t)},\; \sup_{M\times[0, T]}|\Rmt|_{\gt(t)} \leq K_0. 
\]
Then $g(t) = \gt(t)$ for all $t\in [0, T]$.
\end{theorem}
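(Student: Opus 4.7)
My plan is to reduce the theorem to a Grönwall inequality for a single scalar energy that measures how $g$ and $\gt$ differ in their metrics, connections, and curvature tensors. Set
\begin{equation*}
h \dfn \gt - g, \qquad A \dfn \nabt - \grad, \qquad T \dfn \Rmt - \Rm,
\end{equation*}
regarding $A$ as a $(1,2)$-tensor and $T$ as a $(1,3)$-tensor. The hypotheses, together with Shi's local derivative estimates applied to both flows, furnish uniform bounds on $h$, $A$, $T$ and a fixed number of their covariant derivatives, and ensure the uniform equivalence of $g$ and $\gt$ on $M \times [0,T]$.

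The first step is to derive evolution equations for these three tensors. Using $\ast$ schematically for contractions involving the (uniformly bounded) metrics, inverses, and curvatures, I would show
\begin{equation*}
\pdt h = h \ast \Rm + T \ast g^{-1}, \qquad \pdt A = \grad T + (A + h)\ast \Rm + h \ast \grad \Rm,
\end{equation*}
\begin{equation*}
\heat T = \grad A \ast \Rm + A \ast \grad \Rm + T \ast \Rm.
\end{equation*}
The first follows from the algebraic identity $\Rct - \Rc = h \ast \Rmt + g^{-1} \ast T$ combined with \eqref{eq:rf}; the second from the classical formula $\pdt \Gam = g^{-1} \ast \grad \Rc$ applied to both $g$ and $\gt$; and the third from Hamilton's identity $\heat \Rm = \Rm \ast \Rm$ applied to $\Rmt$, corrected by $\widetilde\Delta \Rmt - \Delta \Rmt \sim A \ast \grad \Rmt + (\grad A + A \ast A) \ast \Rmt$.

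Next I form the weighted energy
\begin{equation*}
\mc{E}(t) \dfn \int_M \bigl(\normsq{h} + \alpha \normsq{A} + \beta \normsq{T}\bigr) e^{-\phi}\, dV_{g(t)},
\end{equation*}
where $\phi$ grows quadratically in the $g(0)$-distance from a fixed basepoint; this guarantees convergence in view of the (at most exponential) volume growth of $(M, g(0))$ and the uniform pointwise bounds on the integrand. Differentiating and inserting the evolution equations, the heat operator acting on $\normsq{T}$ produces, after integration by parts against $e^{-\phi}$, a dissipation $-2\beta \gsq{T}$ which via Cauchy-Schwarz absorbs both the cross term $\alpha \langle A, \grad T\rangle$ arising from $\pdt \normsq{A}$ and the term $\beta \langle T, \grad A\rangle$ arising from the $\grad A \ast \Rm$ contribution to $\heat T$, provided $\alpha$ and $\beta$ are chosen in a range depending on $K_0$. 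Everything else is pointwise dominated by a constant times the integrand of $\mc{E}$, yielding $\ddt \mc{E} \leq C \mc{E}$; since $\mc{E}(0) = 0$, Grönwall then forces $h \equiv 0$ on $[0,T]$.

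The main obstacle I expect is managing the non-compactness. The integration by parts that transfers $\Delta$ from $\normsq{T}$ onto $e^{-\phi}$ must be justified, and the error terms produced by the weight (involving $\grad \phi$ and $\Delta \phi$) must be controlled by the Laplacian comparison theorem on $(M, g(0))$ without disturbing the sign of the dissipation. Simultaneously tuning $\alpha$, $\beta$, and the growth rate of $\phi$ so that every cross term and every weight-error is absorbed is the most delicate part; apart from this, the proof reduces to bookkeeping of tensorial identities and a routine application of Grönwall.
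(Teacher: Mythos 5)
Your overall strategy is the paper's: form a weighted $L^2$ energy of the differences of metrics, connections, and curvatures, and run Gronwall. But two specific steps, as you have written them, would fail. First, the term $\langle T, \grad A \ast \Rm\rangle$ coming from $\widetilde\Delta - \Delta$ cannot be absorbed by the dissipation $-2\beta\gsq{T}$ via Cauchy--Schwarz: that would require a $\gsq{A}$ term on the good side, and there is none, since $A$ satisfies only an ODE. You must first remove the derivative from $A$, either by an extra integration by parts or, as the paper does, by writing the difference of the two Laplacians in divergence form, $\operatorname{div} U$ with $U = g^{ab}\nabla_b\Rt - \gt^{ab}\nabt_b\Rt$, which involves only $h$, $A$, $\Rt$ and $\nabt\Rt$ (no derivatives of $h$ or $A$), and then integrating $\langle \operatorname{div}U, S\rangle$ by parts against the weight. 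Second, your static quadratic weight $e^{-\phi}$ does not close the estimate at spatial infinity: the integration by parts produces a term of the form $\int |\nabla\phi|^2 |T|^2 e^{-\phi}\,d\mu$, whose coefficient grows quadratically in $r_0$ and is not dominated by a constant times the energy; Laplacian comparison controls $\Delta\phi$ but does nothing for this term. The paper's device (Lemma \ref{lem:cutoffgrowth}, of Karp--Li type) is a \emph{time-dependent} weight $\eta = B\bar r^2/(4(2\tau - t))$ chosen so that $\partial_t\eta \geq 3|\nabla\eta|^2$; the $-\partial_t\eta$ term produced by differentiating $e^{-\eta}$ in $t$ is exactly what absorbs the $|\nabla\eta|^2$ errors. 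Without this (or an equivalent cutoff-and-limit localization), your inequality $\ddt{\mc{E}} \leq C\mc{E}$ is not obtained.

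A further inaccuracy: Shi's estimates do not give uniform bounds on covariant derivatives of curvature on all of $M\times[0,T]$, only bounds of the form $\sqrt{t}\,|\nabla \Rm| \leq N$, which blow up as $t\searrow 0$ since no regularity of $g_0$ beyond bounded curvature is assumed. Consequently several coefficients in your Gronwall inequality are singular like $t^{-1/2}$. This is repairable (such coefficients are integrable in time, or one can weight $|h|^2$ and $|A|^2$ by $t^{-1}$ and $t^{-\beta}$ as the paper does, using the decay $|h|\leq Nt$, $|A|\leq N\sqrt{t}$ to keep $\mc{E}(0)=0$), but your claim of uniform derivative bounds, as stated, is false and hides where the time-singular terms must be absorbed.
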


The uniqueness of solutions to \eqref{eq:rf} is not an automatic consequence of the theory of parabolic equations,
since the Ricci flow equation is only weakly-parabolic.  For compact $M$, there are two basic arguments, both due to 
Hamilton.  The first appears in Hamilton's orginal paper \cite{Hamilton3D} as 
a byproduct of the proof of the short-time existence of solutions
and is based on a Nash-Moser-type inverse function theorem.  The second, given in \cite{HamiltonSingularities}, effectively reduces the question 
of uniqueness to that for the strictly parabolic Ricci-DeTurck flow.  The basis of this argument is
the observation that the DeTurck diffeomorphisms, which are generally obtained as solutions to a system of ODE depending 
on a given solution to the Ricci-DeTurck flow, 
can also be represented as the solutions to a certain parabolic PDE
-- a harmonic map heat flow -- which depends on the associated solution to the Ricci flow.
As DeTurck's method is applicable to many other geometric evolution equations with gauge-based degeneracies, 
this second argument of Hamilton's gives rise to an elegant and flexible general prescription in which one exchanges the problem of uniqueness 
for one weakly parabolic system for the (separate) problems of existence and uniqueness for one or more auxiliary strictly parabolic systems. 

While Hamilton's prescription is also the basic template for Chen-Zhu's proof \cite{ChenZhu}
in the complete noncompact case, its components are not
straightforward to assemble in this setting. Given the lack
of general theory for the harmonic map flow into arbitrary target manifolds,
the authors in particular had to overcome the problem of 
short-time existence for their specific variant of this flow and, effectively, produce solutions 
(together with the crucial estimates) from scratch. This they accomplished
with the combination of a clever conformal transformation  of the initial metric and a series 
of intricate a priori estimates-- their approach producing, as independently useful byproducts, 
well-controlled solutions to both the harmonic map and Ricci-DeTurck flows associated to a given solution 
to the Ricci flow.

In this paper, however, we demonstrate that, if one is interested solely in the uniqueness of solutions to the Ricci flow,
it is possible to eliminate the passage through the harmonic map and Ricci-DeTurck-flows
and thus circumvent these delicate issues of existence entirely.
The idea, given two solutions $g(t)$ and $\gt(t)$ to the initial value problem \eqref{eq:rf}, is to consider
a quantity of the form
\begin{equation*}
 \mathcal{E}(t) = \int_M\left(t^{-\alpha}|g-\gt|^2_{g(t)}+ t^{-\beta}|\Gamma -\Gamt|^2_{g(t)} 
+ |\Rm-\Rmt|^2_{g(t)}\right)\Phi\; d\mu_{g(t)} 
\end{equation*}
for a suitable choice of the constants $\alpha$ and $\beta$ and weight function $\Phi= \Phi(x, t)$,
and to argue from the differential inequality it satisfies that it must vanish identically.  
This functional $\mc{E}$ is, in a sense, a compromise between 
the two perhaps most ``obvious'' candidates for such a quantity: the 
$L^2$-norms of the differences $g-\gt$ and $\Rm-\Rmt$.  Although neither $g-\gt$ nor $\Rm-\Rmt$ themselves 
satisfy a parabolic equation, they fail to do so in such a way
that their evolution equations, together with that of $\Gam-\Gamt$, can still be organized in a closed
and virtually parabolic system of inequalities to which the energy method may be applied.   

 When $M$ is compact, for example, it is not hard to show
that with $\alpha = \beta = 0$, and $\Psi\equiv 1$, we have 
$\mc{E}^{\prime}(t)\leq C\mc{E}$, and hence, since $\mc{E}(0) =0$, that $\mc{E}\equiv 0$.
When $M$ is non-compact, and the curvature of $g(t)$ and $\gt(t)$
is assumed to be uniformly bounded, we can take
$\alpha = 1$, $\beta \in (0, 1)$ and $\Phi$ of some sufficiently rapid decay in space in order
to draw much the same conclusion. With a similar argument and somewhat more careful estimation,
we can obtain the following rather inexpensive extension of Theorem \ref{thm:rfuniqueness}, which says, essentially,
that uniqueness holds in the class of solutions whose curvature at times $t > 0$ has at most quadratic
growth in the initial distance. 
Below $r_0(x) \dfn \operatorname{dist}_{g_0}(x, x_0)$  is the distance with respect to the metric $g_0$ from some fixed $x_0\in M$.

\begin{theorem}\label{thm:quadgrowth}
 Suppose $(M, g_0)$ is a complete noncompact Riemannian manifold satisfying the volume growth condition
\begin{equation}\label{eq:volgrowth}
    \operatorname{vol}_{g_0}(B_{g_{0}}(x_0, r)) \leq V_0e^{V_0r^2}
\end{equation}
for some constant $V_0$ and all $r > 0$ .  If $g(t)$ and $\gt(t)$ are smooth solutions
to \eqref{eq:rf} on $M\times [0, T]$ for which 
\begin{equation}
 \label{eq:unifequiv}
     \gamma^{-1} g_0(x) \leq g(x, t),\, \gt(x, t) \leq \gamma g_0(x),
 \end{equation}
and 
\begin{equation}\label{eq:quadgrowth}
    |\Rm(x, t)|_{g(t)} + |\Rmt(x, t)|_{\gt(t)} \leq \frac{K_0}{t^{\delta}}(r_0^2(x)+1),
\end{equation}
on $M\times (0, T]$ for some constants $\gamma$, $K_0$, and $\delta \in (0, 1/2)$,  then $g(t) = \gt(t)$ for all $t\in [0, T]$.
 \end{theorem}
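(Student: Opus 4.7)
Following the template outlined in the introduction, write $h := g - \gt$, $A := \Gam - \Gamt$, and $S := \Rm - \Rmt$, and consider the energy
$$\mathcal{E}(t) := \int_M\bigl(t^{-\alpha}|h|_{g(t)}^2 + t^{-\beta}|A|_{g(t)}^2 + |S|_{g(t)}^2\bigr)\,\Phi(x,t)\,d\mu_{g(t)}$$
for exponents $\alpha,\beta>0$ and a positive weight $\Phi$ with Gaussian decay in $r_0$, to be chosen. The plan is to pick these so that (i) $\mathcal{E}$ is finite on $(0,T]$, (ii) $\limsup_{t\to 0^+}\mathcal{E}(t) = 0$, and (iii) $\mathcal{E}'(t)\leq F(t)\mathcal{E}(t)$ for some $F\in L^1(0,T)$. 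A Gronwall argument then forces $\mathcal{E}\equiv 0$, and \eqref{eq:unifequiv} converts this into $g\equiv\gt$.

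\textbf{Evolution inequalities.} The first task is to derive, by the same commutator manipulations used in the proof of Theorem \ref{thm:rfuniqueness}, the triangular system
\begin{align*}
\partial_t|h|^2 &\leq C\,\mathcal{K}\,|h|^2 + C|h|\,|S|,\\
(\partial_t-\Delta)|A|^2 &\leq -|\nabla A|^2 + C\,\mathcal{K}\,(|A|^2+|h|^2) + C|A|\,|\nabla S|,\\
(\partial_t-\Delta)|S|^2 &\leq -|\nabla S|^2 + C\,\mathcal{K}\,(|S|^2+|A|^2+|h|^2),
\end{align*}
where $\mathcal{K}(x,t) := |\Rm|_{g(t)} + |\Rmt|_{\gt(t)}$ and $C$ depends only on $n$ and $\gamma$. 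Closure of the system rests on the algebraic identity $\nabla h = A\ast g$, which allows $h$ to be treated as a zeroth-order quantity. Crucially, no constant depends on a pointwise upper bound of $\mathcal{K}$, so the system is structurally compatible with the quadratic-in-$r_0$ bound $\mathcal{K}\leq K_0t^{-\delta}(r_0^2+1)$ from \eqref{eq:quadgrowth}.

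\textbf{Weight choice and differential inequality.} Take $\Phi(x,t) = \exp\bigl(-a\,r_0^2(x)/(T+\tau - t)\bigr)$ for some small $\tau>0$ and $a$ sufficiently large relative to $V_0, K_0, \gamma$. By \eqref{eq:volgrowth} and \eqref{eq:unifequiv}, $\Phi\,d\mu_{g(t)}$ is a finite measure for each $t\in[0,T]$, and a direct computation gives $\partial_t\Phi = -a r_0^2\Phi/(T+\tau-t)^2$, with $|\nabla\Phi|^2/\Phi$ and $|\Delta_{g(t)}\Phi|$ of comparable magnitude (using Laplacian comparison for $r_0$ in a barrier sense together with \eqref{eq:unifequiv}). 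Multiplying the three inequalities of the previous step by $t^{-\alpha}\Phi$, $t^{-\beta}\Phi$, and $\Phi$ respectively, integrating over $M$, integrating the Laplacians by parts, and summing: the cross terms $|h||S|$ and $|A||\nabla S|$ are absorbed via Cauchy-Schwarz into the dissipation $|\nabla S|^2\Phi$ and into the negative reservoirs $-\alpha t^{-\alpha-1}|h|^2\Phi$ and $-\beta t^{-\beta-1}|A|^2\Phi$ produced by the time derivatives of $t^{-\alpha}$ and $t^{-\beta}$; the quadratic component $r_0^2$ of $\mathcal{K}\cdot(\text{integrand})\Phi$ is absorbed by $-\partial_t\Phi$, which itself is of the form $(\text{positive})\cdot r_0^2\Phi$; and the $t^{-\delta}$ factor is absorbed by the $t^{-1}$ reservoirs since $\delta<1$. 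Setting $\alpha=1$ and $\beta\in(0,1-2\delta)$, one reaches $\mathcal{E}'(t)\leq F(t)\mathcal{E}(t)$ with $F\in L^1(0,T)$.

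\textbf{Initial vanishing and main obstacle.} For $\mathcal{E}(0^+) = 0$: integrating $\partial_t h = -2(\Rc-\Rct)$ from $0$ to $t$ gives $|h(x,t)|\leq C(r_0^2+1)\,t^{1-\delta}$, so $\int_M t^{-1}|h|^2\Phi\,d\mu = O(t^{1-2\delta})\to 0$ precisely when $\delta<1/2$. Shi-type interior derivative estimates applied to $\Rm$ and $\Rmt$ on the parabolic interval $[t/2,t]$, combined with \eqref{eq:quadgrowth}, give $|\nabla\Rm|,|\nabla\Rmt| = O(t^{-\delta-1/2}(r_0^2+1))$; integrating $\partial_t A$ in $t$ (and again using $\delta<1/2$) yields $|A(x,t)| = O(t^{1/2-\delta}(r_0^2+1))$, so $\int_M t^{-\beta}|A|^2\Phi\,d\mu = O(t^{1-2\delta-\beta})\to 0$ by the choice of $\beta$; the $|S|^2$ summand vanishes at $t=0$ (since $g(0)=\gt(0)=g_0$) and its weighted integral tends to $0$ by dominated convergence. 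The principal difficulty is the weight engineering in Step 3: a single $\Phi$ must simultaneously reconcile the Gaussian volume growth \eqref{eq:volgrowth}, the quadratic spatial growth of $\mathcal{K}$, the $t^{-\delta}$ blow-up at $t=0$, and the integration-by-parts structure needed for the energy identity. The restriction $\delta<1/2$ is exactly what makes $\alpha=1$ both admissible (keeping $\mathcal{E}(0^+)=0$) and sufficient (allowing the $t^{-\delta}$ contributions to be dominated by the $t^{-1}$ reservoirs in the Gronwall inequality).
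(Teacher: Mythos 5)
There is a genuine gap, and it sits exactly at what you yourself flag as the ``principal difficulty'': the absorption scheme in your third step does not work, and the system of inequalities it is supposed to act on is not the one that actually holds. First, the system: the evolutions of $A$ and $S$ unavoidably contain coefficients involving the \emph{first derivative} of curvature (from commuting $\nabla$ and $\nabt$ and from the divergence term $U$), namely terms like $|\gt^{-1}||\nabt\Rt|\,|h|\,|A|$, $|\gt^{-1}||\nabt\Rt|\,|A|\,|S|$ and $|\gt^{-1}||\Rt|^2|h|\,|S|$ as in \eqref{eq:hanorm}--\eqref{eq:snorm}. Under \eqref{eq:quadgrowth}, Shi's local estimate gives $|\nabt\Rt|\lesssim t^{-\delta-1/2}(r_0^2+1)^{3/2}$ and $|\Rt|^2\lesssim t^{-2\delta}(r_0^2+1)^2$, which are \emph{not} bounded by $C\,\mathcal{K}$; so your triangular system with coefficient $\mathcal{K}$ and constants depending only on $n,\gamma$ is false as stated (also, $A$ obeys an ODE in $t$, not a heat-type inequality with a $-|\nabla A|^2$ term). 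These worse powers of $t^{-1}$ and of $r_0^2$ are precisely what force the paper to weight $h$ and $A$ spatially as well as temporally ($\bar h=\rho\,t^{-\alpha/2}h$, $\bar A=\rho^{1/2}t^{-\beta/2}A$ with $\alpha=(3+\delta)/2$, $\beta=(1+\delta)/2$), so that after Young's inequality every coefficient collapses to $N\rho/t^{\sigma}$ with $\sigma=(3+\delta)/4<1$; your energy, with no $\rho$-weights on $h$ and $A$ and $\alpha=1$, cannot balance them. Second, the absorption itself: with $\Phi=\exp(-a r_0^2/(T+\tau-t))$ one has $-\partial_t\log\Phi = a r_0^2/(T+\tau-t)^2\le a r_0^2/\tau^2$, which is bounded in time, so it cannot dominate $K_0 t^{-\delta}r_0^2|S|^2$ as $t\searrow 0$; and you cannot split the \emph{product} $t^{-\delta}r_0^2$ into ``$r_0^2$ absorbed by $-\partial_t\Phi$'' plus ``$t^{-\delta}$ absorbed by the $t^{-1}$ reservoirs'' --- at $r_0^2\sim t^{-1}$ the product exceeds any fixed multiple of the sum $t^{-1}+r_0^2$, and the $|S|^2$ summand has no $t^{-1}$ reservoir at all. (There is also the familiar tension that integrability against the $e^{V_0 r^2}$ volume growth wants $a$ large while $\partial_t\eta\gtrsim|\nabla\eta|^2$, needed for the integration by parts, wants $a$ small; this can only be reconciled on a short time interval, which your argument never restricts to.)

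The paper's resolution is structurally different at this point and is worth absorbing: after rescaling to $\bar h,\bar A$ and reducing all coefficients to $N\rho/t^{\sigma}$, it does \emph{not} try to absorb the spatial growth into the weight's time derivative. Instead it localizes with cutoffs $\phi_r$ supported in $B_{g_0}(x_0,r)$, so the Gronwall coefficient becomes $N(r^2+1)/t^{\sigma}$ (integrable in $t$ since $\sigma<1$), accepts an error term of size $r^{-2}t^{-\sigma}e^{-\epsilon r^2/T_0}$ coming from $|\nabla\phi_r|^2$ on the annulus, integrates the resulting ODE, and then beats the $r$-dependent Gronwall factor $e^{Q(r)t^{1-\sigma}}$ with $e^{-\epsilon r^2/T_0}$ by taking $T_0$ small, finally letting $r\to\infty$ and iterating in time. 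Your Gronwall function $F(t)$ is required to be independent of $x$, and without either full absorption of the spatial growth by the weight (which your weight cannot deliver) or this localization-plus-short-time mechanism, the inequality $\mathcal{E}'\le F(t)\mathcal{E}$ simply is not reached. The initial-vanishing discussion and the role of $\delta<1/2$ in your last step are essentially right (up to the power $(r_0^2+1)^{3/2}$ in the bound for $|A|$), but the heart of the proof is missing.
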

In particular, we do not impose any condition on the curvature tensor of the initial metric,
although the volume condition \eqref{eq:volgrowth} is implied, for example,
by a bound of the form $\Rc(g_0)\geq -C(r_0^2+1)g_0$. Note that the uniform equivalence \eqref{eq:unifequiv}
is automatic in the case that the curvature tensors
of $g(t)$ and $\gt(t)$ are assumed bounded on the time-slices $M\times\{t\}$ for $t > 0$ (but blow-up
at a rate no greater than $t^{-\delta}$ as $t\searrow 0$), and so we have the following consequence.
\begin{corollary}
 If $g_0$ is a complete metric satisfying $\eqref{eq:volgrowth}$ and $g(t)$ and $\gt(t)$ are smooth solutions to \eqref{eq:rf} satisfying
\[
      |\Rm(x, t)|_{g(t)}+ |\Rmt(x, t)|_{\gt(t)} \leq \frac{K_0}{t^{\delta}},
\]
on $M\times (0, T]$ for some $K_0$ and $\delta\in (0, 1/2)$, then $g(t) = \gt(t)$ for all $t\in [0, T]$.
\end{corollary}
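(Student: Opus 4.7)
The plan is to reduce the corollary to Theorem~\ref{thm:quadgrowth} by showing that the time-dependent curvature bound $|\Rm|_{g(t)}+|\Rmt|_{\gt(t)}\leq K_0 t^{-\delta}$ already implies both hypotheses of that theorem. The quadratic growth condition \eqref{eq:quadgrowth} follows trivially with the same $K_0$, since $1\leq r_0^2(x)+1$; only the uniform equivalence \eqref{eq:unifequiv} requires genuine argument.

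To obtain \eqref{eq:unifequiv}, I would fix $x\in M$ and a nonzero tangent vector $V\in T_xM$ and study the scalar function $\varphi(t)\dfn g(x,t)(V,V)$. From \eqref{eq:rf} together with the elementary pointwise bound $|\Rc_{g(t)}(V,V)|/\varphi(t) \leq c_n |\Rm|_{g(t)}$, one has on $(0,T]$
\[
\left|\frac{d}{dt}\log\varphi(t)\right|=\frac{2\left|\Rc_{g(t)}(V,V)\right|}{\varphi(t)}\leq 2c_n|\Rm|_{g(t)}\leq \frac{2c_nK_0}{t^{\delta}}.
\]
Since $\delta<1/2<1$, the right-hand side is integrable near $t=0$. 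Integrating from $s$ to $t$ and letting $s\searrow 0$, using continuity of $g$ at $t=0$ to pass $\varphi(s)\to g_0(V,V)$, one obtains
\[
\left|\log\frac{g(x,t)(V,V)}{g_0(V,V)}\right|\leq \frac{2c_nK_0}{1-\delta}T^{1-\delta}\dfn\Lambda.
\]
Thus $e^{-\Lambda}g_0\leq g(t)\leq e^{\Lambda}g_0$ pointwise on $M\times[0,T]$, and the identical argument for $\gt$ yields \eqref{eq:unifequiv} with $\gamma\dfn e^{\Lambda}$.

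With both \eqref{eq:unifequiv} and \eqref{eq:quadgrowth} verified, Theorem~\ref{thm:quadgrowth} applies directly and forces $g(t)=\gt(t)$ on $[0,T]$. There is no real obstacle to this reduction: the only delicate point is the integrability of $t^{-\delta}$ as $t\searrow 0$, which holds whenever $\delta<1$, a condition strictly weaker than the $\delta<1/2$ already assumed. Consequently, the restriction $\delta<1/2$ plays no role in the passage from the corollary to the theorem; it enters only through the proof of Theorem~\ref{thm:quadgrowth} itself, where the quadratic spatial growth of the curvature interacts with the exponential volume growth \eqref{eq:volgrowth}.
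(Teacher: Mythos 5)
Your reduction is correct and is essentially the paper's own (implicit) argument: the paper derives the corollary from Theorem \ref{thm:quadgrowth} by noting that \eqref{eq:quadgrowth} holds trivially and that the uniform equivalence \eqref{eq:unifequiv} is automatic when the curvature blows up no faster than $t^{-\delta}$, which is exactly your integration of $\frac{d}{dt}\log g(V,V)$ against the integrable bound $2c_nK_0t^{-\delta}$. Your observation that only $\delta<1$ is needed for this step, the constraint $\delta<1/2$ being consumed inside the proof of Theorem \ref{thm:quadgrowth}, is likewise consistent with the paper.
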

With Shi's existence theorem \cite{Shi}, we also have have the following special case.
\begin{corollary} If $g_0$ is complete and of bounded curvature, then any solution $g(t)$
to the initial value problem \eqref{eq:rf} which remains uniformly equivalent to $g_0$ and obeys the bound 
$|\Rm(x, t)| \leq K_0(r_0^2(x) + 1)$
on $M\times (0, T]$ must have bounded curvature tensor.
\end{corollary}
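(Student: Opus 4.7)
The plan is to produce a bounded-curvature auxiliary Ricci flow $\tilde g(t)$ via Shi's theorem \cite{Shi} and then to identify $g\equiv\tilde g$ on $[0,T]$ by means of Theorem~\ref{thm:quadgrowth}, so that $g$ inherits the bounded-curvature property from $\tilde g$.

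For the setup, the bounded curvature of $g_0$ gives a lower Ricci bound and so, by Bishop--Gromov, the exponential volume-growth condition \eqref{eq:volgrowth}. Shi's short-time existence theorem then produces some $T_1>0$ and a smooth solution $\tilde g(t)$ to \eqref{eq:rf} on $M\times[0,T_1]$ with $\tilde g(0)=g_0$ and $\sup_{M\times[0,T_1]}|\Rmt|_{\tilde g}<\infty$. Together with $\partial_t \tilde g=-2\Rct$, this uniform curvature bound yields via a Gronwall estimate the uniform equivalence $\tilde g(t)\sim g_0$ on $[0,T_1]$.

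On $[0,T_1\wedge T]$ the pair $(g,\tilde g)$ thus satisfies the hypotheses of Theorem~\ref{thm:quadgrowth} for any fixed $\delta\in(0,1/2)$: both metrics are uniformly equivalent to $g_0$, and the pointwise curvature bounds ($|\Rm|_g\leq K_0(r_0^2+1)$ by assumption and $|\Rmt|_{\tilde g}$ uniformly bounded) both trivially dominate the required form $Kt^{-\delta}(r_0^2+1)$ at positive times. Theorem~\ref{thm:quadgrowth} then gives $g\equiv\tilde g$ on $[0,T_1\wedge T]$, and in particular $g$ has bounded curvature there.

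If $T_1\geq T$ we are done. Otherwise I would iterate: since $g(T_1)=\tilde g(T_1)$ remains uniformly equivalent to $g_0$ and of bounded curvature, Shi's theorem applied at $T_1$ yields a bounded-curvature extension $\tilde g_2$ on $[T_1,T_2]$, and a time-shifted application of Theorem~\ref{thm:quadgrowth} identifies $g$ with $\tilde g_2$ on this interval. Setting $t^*=\sup\{\,t\in[0,T]:\sup_{M\times[0,t]}|\Rm|_g<\infty\,\}$, this establishes $t^*>0$ and that the underlying set is open to the right at each of its points. The main difficulty lies in closedness: one must exclude that $\sup_{M\times[0,t]}|\Rm|_g$ blows up as $t\nearrow t^*$, which would cause each Shi extension window (of size $\gtrsim c/K(t)$) to shrink to zero. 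I would handle this by exploiting that $|\Rm(x,t)|_g\leq K_0(r_0^2(x)+1)$ is time-uniform and that $g$ remains uniformly equivalent to the bounded-curvature metric $g_0$, so that any such blow-up would necessarily happen at spatial infinity in $(M,g_0)$; a localization argument on $g_0$-geodesic balls of fixed (but arbitrary) radius combined with Shi-type interior estimates then rules this out, yielding $t^*=T$.
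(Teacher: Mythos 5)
Your first two paragraphs are precisely the derivation the paper intends: the corollary is presented there as an immediate consequence of Shi's existence theorem \cite{Shi} and Theorem \ref{thm:quadgrowth} (bounded curvature of $g_0$ gives \eqref{eq:volgrowth} via Bishop--Gromov, the Shi solution is uniformly equivalent to $g_0$ by the standard Gronwall argument, and both curvature hypotheses dominate the form required by \eqref{eq:quadgrowth} for any fixed $\delta\in(0,1/2)$). That part of your proposal is correct and matches the paper.

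The gap is in your final paragraph. You correctly identify the only failure mode of the iteration: $K(t) = \sup_M|\Rm(\cdot,t)|$ could tend to infinity as $t\nearrow t^*\leq T$ with the near-maxima escaping to spatial infinity, which is perfectly consistent with the time-uniform bound $|\Rm|\leq K_0(r_0^2+1)$ and would make the Shi restart windows (of length comparable to $c(n)/K$) shrink summably. But the fix you sketch does not address this. On $g_0$-balls of \emph{fixed} center and radius nothing blows up at all -- the hypothesis already gives $|\Rm|\leq K_0(r^2+1)$ there, uniformly in $t$ -- so the required localization would have to be on unit balls centered at the escaping points $x_j$, with constants independent of $j$. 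There ``Shi-type interior estimates'' give nothing: the Bando--Shi local estimates take a curvature bound on a parabolic cylinder as \emph{input} and output bounds on derivatives of curvature; they cannot convert the initial bound $|\Rm_{g_0}|\leq k_0$ plus the purely $C^0$ information $\gamma^{-1}g_0\leq g(t)\leq \gamma g_0$ into a forward-in-time curvature bound. What you would need is a pseudolocality-type or local curvature estimate (in the spirit of Perelman, or of the local estimates in \cite{Chen}), which requires genuinely more input and is not provided by anything in this paper or in your sketch. So, as written, the closedness step is asserted rather than proved. (For fairness: the paper states the corollary without proof, so the expected content is your first two paragraphs together with the restart iteration while the curvature bound persists; your flagging of the closedness question is legitimate, but the proposed resolution does not close it.)
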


We note that certain extensions of Theorem 1 and other related topics can be found in the papers
 \cite{Chen}, \cite{ChenYan}, \cite{Fan}, \cite{GiesenTopping}, \cite{Hsu}, \cite{LuTian}, \cite{Topping}. 
For example, in \cite{Chen}, Chen  proves that if $(M^3, g_0)$ is complete, has bounded nonnegative sectional
curvature, and satisfies a certain uniform volume condition, then any two complete solutions to the Ricci flow
with this initial data must agree identically; for surfaces, he proves the same result for initial data
with Gaussian curvature of arbitrary sign.  It is an interesting question whether his result may be extended
to higher dimensions and to initial metrics with some curvature growth. 
Since we have only used rather classical estimates in this paper
and have made no fine use of the nonlinear reaction terms in the evolution
of the curvature, we expect that the combination of conditions \eqref{eq:unifequiv}
and \eqref{eq:quadgrowth} in Theorem \ref{thm:quadgrowth} can be relaxed further.
However, the general technique we describe may perhaps nevertheless
be of use to such future efforts as a strategy to bypass the gauge-degeneracy of the equation and possibly also
of use for the quantitative comparison of solutions that ``agree'' in some limiting sense as 
$t\to T\in[-\infty, \infty]$.

\section{Preliminaries}

Going forward, we assume that $g(t)$ and $\gt(t)$ are complete solutions to \eqref{eq:rf}.
We will select one of the metrics, $g(t)$, as our reference metric and use the notation
$V\ast W$ below to represent a linear combination of contractions of the tensors $V$ and $W$ with respect 
to the metric $g(t)$ in which the coefficients and the total number of terms are bounded by some constant
depending only on the dimension.  In the estimates, we will use $C = C(n)$ to such denote such a constant,
which may change from line to line. To further reduce clutter in our expressions we will often use
$|\cdot|\dfn |\cdot|_{g(t)}$ to denote the norms induced on $T^k_l(M)$ by $g(t)$ (distinguishing other norms
by a subscript), and simply write $R =\Rm$ and $\Rt = \Rmt$ for the $(3, 1)$ curvature tensors associated with $g$ and $\gt$.
(We will use $\operatorname{scal}(g)$ for the scalar curvature.) Finally, we will use the notation $T^{*} \dfn \max\{T, 1\}$.
\subsection{Evolution equations and inequalities}

Define
\[
 h\dfn g - \gt, \quad A \dfn \nabla -\nabt,\quad S \doteqdot R -\Rt,
\]
that is,  $A^{k}_{ij} \dfn \Gamma_{ij}^k-\widetilde{\Gamma}_{ij}^k$, and $S_{ijk}^l \dfn R_{ijk}^l - \Rt_{ijk}^l$.  
We begin by organizing the evolution equations satisfied by these tensors in such a way that every term contains either a factor of 
(some contraction of) one of the group or $\nabla S$.
We note for later
that we can write
\begin{equation}\label{eq:ahidentities}
  g^{ij}- \gt^{ij} = -g^{ik}\gt^{jl}h_{kl}, \quad\mbox{and}\quad \nabla_k \gt^{ij} = \gt^{aj}A_{ka}^i + \gt^{ia}A^j_{ka},
\end{equation}
or, according to our convention,
\[
g^{-1}-\gt^{-1} = \gt^{-1}\ast h,\quad\mbox{and}\quad \quad\nabla \gt^{-1} = \gt^{-1}\ast A.
\]

Now, on $M\times [0, T]$, the tensors $h$ and $A$ satisfy 
\[
\pdt h_{ij} = -2(R_{ij} - \tilde{R}_{ij}) = -2S^l_{lij}
\]
and
\begin{align}\label{eq:aevol}
\begin{split}
\pdt A_{ij}^k &= \gt^{mk}\left(\nabt_{i}\tilde{R}_{jm} + \nabt_{j}\tilde{R}_{im}- \nabt_{m}\tilde{R}_{ij}\right)\\
  &\phantom{=}-g^{mk}\left(\nabla_{i}R_{jm} + \nabla_{j}R_{im}- \nabla_mR_{ij}\right),
\end{split}
\end{align} respectively.  Since
\begin{equation}\label{eq:diff}
  \nabt_{i}\tilde{R}_{jk} - \nabla_{i}\tilde{R}_{jk} = A_{ij}^p\tilde{R}_{pk} + A_{ik}^p\tilde{R}_{jp}, 
\end{equation}
and $\nabla_{i}(R_{jk} - \Rt_{jk}) = \nabla_{i}S^{l}_{ljk}$, we can use the first equation in \eqref{eq:ahidentities} to put the equation
for $\pdt A$ in the schematic form
\begin{align}\label{eq:aevolschem}
   \pdt A &=\gt^{-1}\ast h\ast\nabt \Rt + A\ast\tilde{R}  +C \nabla S,
\end{align}
using  to obtain the last term.
Similarly, using
\begin{align}\label{eq:revol}
\begin{split}
	\pdt R^{l}_{ijk} &= \Delta R^{l}_{ijk} 
	+ g^{pq}\left( R^r_{ijp}R^l_{rqk} - 2R^{r}_{pik}R^l_{jqr} 
	+ 2 R^l_{pir} R^r_{jqk}\right)\\
		&\phantom{=\Delta R^{l}_{ijk} } 
	- g^{pq}\left(R_{ip} R^l_{qjk} + R_{jp} R^l_{iqk} + R_{kp} R_{ijq}^l\right)  
	+ g^{pl}R_{pq} R^q_{ijk},
\end{split}
\end{align}
together with the generalization $(\nabla - \nabt)W = A\ast W$ of \eqref{eq:diff} to arbitrary tensors $W$, 
we can express the evolution equation of $S$ in the schematic form
\begin{align}\begin{split}\label{eq:sevolschem}
  \pdt S &= \nabla_a(g^{ab}\nabla_b R - \gt^{ab}\nabt_b\tilde{R}) + \gt^{-1}\ast A \ast \nabt\tilde{R}\\
	&\phantom{=}+ \gt^{-1}\ast h \ast \Rt\ast\Rt + S\ast R + S\ast \Rt.
\end{split}
\end{align}
Here the shorthand $\nabla_a(g^{ab}\nabla_b R - \gt^{ab}\nabt_b\tilde{R})$ represents
$\nabla_{a}\left(g^{ab}\nabla_bR^{l}_{ijk} - \gt^{ab}\nabt_b\Rt^{l}_{ijk}\right)$, and we have
obtained \eqref{eq:sevolschem} from \eqref{eq:ahidentities} and \eqref{eq:revol} via the following explicit representation of the $\gt(t)$-Laplacian of $\Rt$ as
\begin{align*}
 \widetilde{\Delta}\Rt_{ijk}^l &= \gt^{ab}\nabt_a\nabt_b\Rt_{ijk}^l = \nabt_{a}\left(\gt^{ab}\nabt_b\Rt_{ijk}^l\right)\\
\begin{split}
 &=\nabla_a\left(\gt^{ab}\nabt_b\Rt_{ijk}^l\right) -A_{ap}^a\gt^{pb}\nabt_b\Rt_{ijk}^l -A_{ap}^l\gt^{ab}\nabt_b\Rt_{ijk}^p\\	
 &\phantom{=}\quad+ A_{ai}^p\gt^{ab}\nabt_b\Rt_{pjk}^l + A_{aj}^p\gt^{ab}\nabt_b\Rt_{ipk}^l + A_{ak}^p\gt^{ab}\nabt_b\Rt_{ijp}^l.
\end{split}
\end{align*}

Now using the Cauchy-Schwarz inequality together with the above representations, we obtain that
\begin{equation}\label{eq:hanorm}
 \left|\pdt h\right| \leq C|S|,\quad \left|\pdt A\right| \leq C\left(|\gt^{-1}||\nabt\tilde{R}||h| + |\tilde{R}||A| + |\nabla S|\right),
\end{equation}
and
\begin{align}\begin{split}\label{eq:snorm}
  &\left|\pd{S}{t} - \Delta S -  \operatorname{div} U\right| \\
	&\qquad\qquad\leq C\left(|\gt^{-1}||\nabt\Rt||A| + |\gt^{-1}||\Rt|^2|h| + (|R|+|\Rt|)|S|\right).
\end{split}
\end{align}
where $U \dfn g^{ab}\nabla_b\Rt -\gt^{ab}\nabt_b\Rt$ is the section of $T^2_3(M)$ given in local coordinates by
\begin{align}
\begin{split}\label{eq:udef}
 U^{al}_{ijk} &= g^{ab}\nabla_b\Rt^l_{ijk} - \gt^{ab}\nabt_b\Rt_{ijk}^l\\
  &= (g^{ab}-\gt^{ab})\nabt_b\Rt^l_{ijk} 
      + g^{ab}(\nabla_b\Rt_{ijk}^l - \nabt_b\Rt_{ijk}^l)\\
  &= -g^{ak}\gt^{bl}h_{kl}\nabt_b\Rt_{ijk}^l + g^{ab}\left(A_{bp}^l\Rt_{ijk}^p - A_{bi}^p\Rt_{pjk}^l
    - A_{bj}^p\Rt_{ipk}^l - A_{bk}^p\Rt_{ijp}^l\right),
\end{split}
\end{align}
and by $\operatorname{div} U$ we mean the section of $T_{3}^1(M)$ given
by $(\operatorname{div} U)_{ijk}^l = \nabla_{a}U^{al}_{ijk}$. Observe that $U$ satisfies
\begin{equation}\label{eq:unorm}
    |U| \leq C(|\gt^{-1}||\nabt\Rt||h| + |A||\Rt|).
\end{equation}

We leave these evolution inequalities in the above rather raw form for the moment and 
return to simplify them later in forms specialized
to the specific assumptions of Theorems \ref{thm:rfuniqueness} and \ref{thm:quadgrowth}.
\subsection{A function of rapid decay}
In order that our energy quantity $\mc{E}$ be well-defined and that the differentiations and integrations-by-parts we will ultimately 
wish to perform be valid, we will need to integrate against a weight function of sufficiently rapid decay. 
\begin{lemma}\label{lem:cutoffgrowth}
  Suppose $\bar{g}(t)$ is a smooth family of complete metrics on $M\times [0, T]$ satisfying $\gamma^{-1} \bar{g} \leq g(t)$,
  where $\bar{g} = \bar{g}(0)$ and $x_0\in M$. Define $\bar{r}(x) = \operatorname{dist}_{\bar{g}(0)}(x, x_0)$. 
  Then, for any positive constants $L_1$ and $L_2$, there exists a positive constant $T^{\prime} = T^{\prime}(n, \gamma, L_1, L_2, T)$,
  and a function $\eta: M\times [0, T^{\prime}]\to \RR{}$ that is smooth in $t$,
  Lipschitz (and smooth $d\mu_{g(t)}-a.e.$) on each $M\times \{t\}$, and that simultaneously satisfies
  the conditions
\[
-\pd{\eta}{t} + L_1 |\eta|^2_{\bar{g}(t)} \leq 0,\quad\mbox{and}\quad
  e^{-\eta} \leq e^{-L_2\bar{r}^2(x)},
\]
 on $M\times[0, \tau]$ whenever $0 < \tau \leq T^{\prime}$.
\end{lemma}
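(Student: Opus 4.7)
The plan is to construct $\eta$ in closed form by taking
\[
 \eta(x,t) = \frac{L_2\,\bar r^2(x)}{1-\beta t}
\]
for a positive constant $\beta = \beta(n,\gamma,L_1,L_2)$ to be chosen, and then selecting $T^\prime$ small enough, specifically $T^\prime\leq \min\{T, 1/(2\beta)\}$, that $1/2\leq 1-\beta t\leq 1$ throughout $[0,T^\prime]$. The lower bound $\eta\geq L_2\bar r^2$ (equivalently $e^{-\eta}\leq e^{-L_2\bar r^2}$) is then immediate from $1-\beta t\leq 1$.

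For the differential inequality, at any point where $\bar r$ is smooth (i.e., off the $\bar g$-cut locus of $x_0$) a direct computation yields
\[
 -\partial_t \eta + L_1|\nabla \eta|^2_{\bar g(t)}
 = \frac{L_2 \bar r^2}{(1-\beta t)^2}\Bigl(-\beta + 4L_1L_2|\nabla \bar r|^2_{\bar g(t)}\Bigr).
\]
Since $\bar g = \bar g(0)$ is complete and $\bar r(\cdot) = \operatorname{dist}_{\bar g}(\cdot, x_0)$, the Gauss lemma gives $|\nabla \bar r|^2_{\bar g} = 1$ a.e. on $M$, and dualizing the hypothesis $\gamma^{-1}\bar g \leq g(t)$ to $g(t)^{-1}\leq \gamma\,\bar g^{-1}$ provides the pointwise a.e.~bound $|\nabla \bar r|^2_{g(t)}\leq \gamma$. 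Provided the norm in the statement is taken with respect to a metric satisfying an analogous bound $|\nabla \bar r|^2_{\bar g(t)}\leq K$ with $K = K(\gamma, n, T)$ on a suitably short time interval (in the intended application $\bar g(t)$ will coincide with $g(t)$ or $\gt(t)$, for which this follows at once from the available equivalence), we then set $\beta = 4L_1L_2K$ to render the right-hand side nonpositive.

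The regularity claim is standard: $\bar r$ is $1$-Lipschitz on $(M,\bar g)$ and smooth off a set of $d\mu_{g(t)}$-measure zero (the cut locus), so $\eta$ is locally Lipschitz in $x$, smooth in $t$, and smooth in $(x,t)$ off this null set, and the differential inequality therefore holds $d\mu_{g(t)}$-almost everywhere. The construction nests properly: if the inequality is valid on $[0,T^\prime]$ with constants $(\beta,T^\prime)$, it is automatic on any shorter subinterval, which accounts for the "whenever $0<\tau\leq T^\prime$" phrasing.

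The main obstacle, as sketched above, is controlling $|\nabla \bar r|^2_{\bar g(t)}$ uniformly in $(x,t)$: the hypothesis only directly compares $\bar g(0)$ with $g(t)$, not $\bar g(t)$ with $\bar g(0)$, so absorbing the time variation of $\bar g(t)$ into the constant $K$ is where the smoothness of the family $\bar g(t)$ on $[0,T]$ (together with the short-time parameter $T^\prime$) has to be used. Once this comparison is in hand, the choice of $\beta$, the choice of $T^\prime$, and the two conditions all follow from the elementary computation above.
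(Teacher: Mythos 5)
Your proposal is correct and takes essentially the same route as the paper: the standard Karp--Li type weight, quadratic in $\bar r$ and divided by a linear function of $t$, with the Gauss lemma ($|\nabla\bar r|_{\bar g(0)}=1$ a.e.) plus the comparison $\gamma^{-1}\bar g(0)\leq g(t)$ giving $|\nabla\bar r|^2_{g(t)}\leq\gamma$; the paper's $\eta=B\bar r^2/(4(2\tau-t))$ with $B$ small and $\tau$ small is just a different parameterization of your $\eta=L_2\bar r^2/(1-\beta t)$ with $\beta$ large and $T'$ small. The point you flag about the norm is simply a typo in the statement: $|\eta|^2_{\bar g(t)}$ should be read as $|\nabla\eta|^2_{g(t)}$ (the paper's own proof uses the bound $|\nabla\bar r|^2_{g(t)}\leq\gamma$ and the later applications take the $g(t)$-norm), so your choice $K=\gamma$, $\beta=4L_1L_2\gamma$ closes the argument.
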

\begin{proof} We follow the construction in Chapter 12 of \cite{RFV2P2},
(cf. also \cite{KarpLi}, \cite{LiYau}), and define $\eta(x, t) \dfn \eta_{B, \tau}(x, t) \dfn B\bar{r}^2(x)/(4(2\tau-t))$
for on $M\times[0, \tau]$. The function $\bar{r}$ is continuous and smooth off of the $\bar{g}$-cut locus of $x_0$, where it satisfies $|\nabla \bar{r}|^2_{\bar{g}(0)}= 1$,
and hence $|\nabla\bar{r}|^2_{g(t)}\leq \gamma$. It follows that
that $\bar{r}$ is Lipschitz and smooth $d\mu_{\bar{g}(t)}$-a~.e~.
for each $t\in [0, \tau]$. For each $L_1 > 0$, we have
\[
 -\pd{\eta}{t} + L_1 |\nabla\eta|_{\bar{g}(t)}^2 \leq -B(1- BL_1\gamma)\frac{\bar{r}^2(x)}{4(2\tau-t)^2},
\]
and we can guarantee the first condition provided $B < 1/(\gamma L_1)$.  Also, for any $t\in [0, \tau]$, we have
\[
  B\bar{r}^2(x)/(8\tau^2) \leq \eta(x, t) \leq  B\bar{r}^2(x)/(4\tau^2),
\]
so, given $L_2 > 0$,  we can ensure
the second condition on $M\times [0, \tau]$ provided $0 < \tau \leq T^{\prime} \dfn \min\{(B/(8L_2))^{1/2}, T\}$ 
\end{proof}
 
\section{The case of uniformly bounded curvature}

Although Theorem \ref{thm:quadgrowth} is strictly stronger than Theorem \ref{thm:rfuniqueness},
the proof can be substantially simplified in the case of bounded curvature, and thus we give a separate
argument here to demonstrate the technique. We will consider only the case of non-compact $M$,
as the proof for that of compact $M$ is nearly identical, but less involved, and can be easily reconstructed from the argument
below. For the remainder of this section, we will assume that $g_0$, $g(t)$, and $\gt(t)$ satisfy the assumptions of
Theorem \ref{thm:rfuniqueness}, and that $\beta\in (0, 1)$ is a fixed constant.

\subsection{Derivative and decay estimates}
We first recall that the global
estimates of Bando \cite{Bando} and Shi \cite{Shi} imply that there exists a constant $N = N(n, K_0, T^*)$
such that
\begin{align}
  \begin{split}\label{eq:globalshi}
      |R|_{g(t)}+ |\Rt|_{g(t)} + \sqrt{t}|\nabla R|_{g(t)} 
    + \sqrt{t}|\nabt\Rt|_{\gt(t)} + t|\nabla\nabla R|_{g(t)} + t|\nabt\nabt\Rt|_{\gt(t)}\leq N.
  \end{split}
\end{align}
on $M\times [0, T]$
It is a standard argument (cf., e.g., Theorem 14.1 in \cite{Hamilton3D}) that the uniform curvature bounds
on $g(t)$ and $\gt(t)$ imply that the metrics $g(t)$, $\gt(t)$, and $g_0$ remain uniformly equivalent.
Thus, the estimates above hold (for some potentially larger $N$) when the norms are replaced by any one of
$|\cdot|\dfn|\cdot|_{g(t)}$, $|\cdot|_{\gt(t)}$, and $|\cdot|_{g_0}$.  In what follows, we will use $N$ to denote
a series of constants depending only on $n$, $K_0$, and $T^*$ 
which may vary from one inequality to the next.

We begin by noting that the same argument that yields the uniform equivalence of the metrics can be used to produce
simple estimates on the decay of $h$ and $A$ as $t \searrow 0$ that imply, in particular, that
 $t^{-1}|h|^2$ and and $t^{-\beta}|A|^2$ tend to zero uniformly as $t\searrow 0$ on $M$ and can be  continuously extended to $M\times [0,T]$.
\begin{lemma}\label{lem:hadecay} Under the assumptions of Theorem \ref{thm:rfuniqueness}, we have
$|h(p, t)| \leq Nt$ and $|A(p, t)| \leq N\sqrt{t}$
on $M\times [0, T]$ for some constant $N = N(n, K_0, T^{*})$.
\end{lemma}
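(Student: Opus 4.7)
The plan is to integrate each evolution equation in time using the fixed initial metric $g_0$, rather than the evolving $g(t)$, as the reference, so that the time derivative of $|\cdot|^2_{g_0}$ picks up no contribution from $\partial_t g^{-1}$. The only additional ingredients are the curvature bound and the Bando--Shi derivative estimates \eqref{eq:globalshi}.

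First I would record the uniform equivalence of $g_0$, $g(t)$, and $\tilde g(t)$ alluded to just before the lemma: since $\partial_t g=-2\Rc$ and $|\Rc|_g\leq K_0$, the scalar ODE satisfied by $\log g(V,V)$ for a fixed vector $V$ gives $|\partial_t\log g(V,V)|\leq 2K_0$, hence $e^{-2K_0 T}g_0\leq g(t)\leq e^{2K_0 T}g_0$ on $[0,T]$, and similarly for $\tilde g$. Consequently the induced norms $|\cdot|_{g(t)}$, $|\cdot|_{\tilde g(t)}$, $|\cdot|_{g_0}$ on any tensor bundle are mutually comparable with constants depending only on $n$, $K_0$, and $T^{*}$.

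For $h$, the equation $\partial_t h_{ij}=-2(R_{ij}-\tilde R_{ij})$, the curvature bound, and uniform equivalence together give $|\partial_t h|_{g_0}\leq N$. Because $g_0$ is time-independent,
\[
\partial_t |h|_{g_0}^2 \leq 2|h|_{g_0}\,|\partial_t h|_{g_0} \leq 2N |h|_{g_0},
\]
and a standard $\varepsilon$-regularization to avoid dividing by $|h|_{g_0}$ at its zero set, followed by integration from $h(0)=0$, yields $|h|_{g_0}(t)\leq Nt$; uniform equivalence then upgrades this to $|h|_{g(t)}\leq N't$. For $A$, first observe that $A(0)=0$ since $g(0)=\tilde g(0)=g_0$. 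From \eqref{eq:aevol}, $\partial_t A$ is a linear combination of contractions of $\nabla R$ and $\tilde\nabla\tilde R$ with $g^{-1}$ and $\tilde g^{-1}$, so Shi's bound $|\nabla R|,|\tilde\nabla\tilde R|\leq N/\sqrt t$ from \eqref{eq:globalshi}, together with uniform equivalence, produces $|\partial_t A|_{g_0}\leq N/\sqrt t$. The same differential-inequality-plus-integration argument then yields $|A|_{g_0}(t)\leq 2N\sqrt t$ and hence $|A|_{g(t)}\leq N'\sqrt t$.

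The only real, and quite mild, obstacle is the choice of reference metric: differentiating $|h|_{g(t)}^2$ directly drags in $\partial_t g^{-1}$ terms that would force a Gronwall step and coarser constants, whereas $|h|_{g_0}^2$ differentiates transparently. Otherwise the lemma reduces to an application of the fundamental theorem of calculus to $\partial_t h$ (uniformly bounded) and $\partial_t A$ (integrable in time near $t=0$ by Shi), and the conclusion is effectively forced by the fact that $h$ and $A$ both vanish at $t=0$.
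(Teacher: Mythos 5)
Your proof is correct and follows essentially the same route as the paper: exploit the uniform equivalence of $g_0$, $g(t)$, $\gt(t)$ to work with the time-independent norm $|\cdot|_{g_0}$, bound $|\partial_t h|_{g_0}$ by the curvature bound and $|\partial_t A|_{g_0}$ by the Bando--Shi estimate \eqref{eq:globalshi}, and integrate in time from the vanishing data at $t=0$ (the paper handles the integrable $s^{-1/2}$ singularity by integrating from $\epsilon$ and letting $\epsilon\to 0$, which matches your regularization step in spirit).
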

\begin{proof}
  At an arbitrary $p$ in $M$, we have
\begin{align*}
	|h(p, t)| &\leq N|h(p, t)|_{g_0} \leq N\int_0^t|S(p, s)|_{g_0}\,ds\\
	    &\leq  N\int_0^t\left(|R(p, s)|_{g_0} + |\Rt(p, s)|_{g_0}\right)\,ds\leq Nt,
\end{align*}
and similarly, using \eqref{eq:aevol}, for any $0 < \epsilon < t$, we have 
\begin{align*}
     |A(p, t) - A(p, \epsilon)| &\leq N |A(p, t) - A(p, \epsilon)|_{g_0}
	      \leq N\int_{\epsilon}^t\left(|\nabla R(p, s)|_{g_0} + |\nabt \Rt(p, s)|_{g_0}\right)\,ds\\
      &\leq N\int_{\epsilon}^t s^{-1/2}\,ds \leq N(\sqrt{t} - \sqrt{\epsilon}).
\end{align*}
Sending $\epsilon\to 0$ completes the proof.
\end{proof}

\subsection{Definition and differentiability of $\mc{E}$}\label{ssec:diffE}
Next, since the uniform curvature bound on $g(0)$ implies a lower bound on $\Rc(g(0))$, 
the Bishop-Gromov volume comparison
theorem implies that 
\[
\operatorname{vol}_{g(0)}(B_{g(0)}(x_0, r)) \leq Ne^{Nr}
\]
for some constant $N$ and all $r > 0$. Since the metrics $g(t)$ and $g_0$ are uniformly equivalent, we thus also have
\[
\operatorname{vol}_{g(t)}(B_{g_0}(x_0, r)) \leq Ne^{Nr}
\]
for some $N$. With any choice of $B >0$ (and independent of $T$), 
the function $\eta =\eta_{B, T}$ of Lemma \ref{lem:cutoffgrowth}
(with $\bar{g}(t) = g(t)$) will satisfy $e^{-\eta}\leq e^{-Br_0^2/(8T)}$ on $M\times [0, T]$, so if we choose $B > 0$ sufficiently
small to ensure, say, that
\[
      \pd{\eta}{t} - 3|\nabla\eta|^2 \geq 0,
\]
it will still follow that any continuous function of at most (sub-quadratic) exponential growth in $r_0(x)$ at $t$ will be $e^{-\eta}\,d\mu$-integrable.
So $\pd{\eta}{t}$ and $|\nabla\eta|^2$, being of quadratic growth in $r_0(x)$, are $e^{-\eta}\,d\mu$-integrable for any $t\in [0, T]$
(where, here and elsewhere, we write $d\mu\dfn d\mu_{g(t)}$),
as are the uniformly bounded quantities $t^{-1}|h|^2$, $t^{-\beta}|A|^2$, and $|S|^2$. 
Moreover, since 
\[
\nabla\Rt = \nabla\Rt + A\ast\Rt,\quad\mbox{and}\quad \nabla\nabt \Rt = \nabt\nabt\Rt + A\ast\nabt\Rt,
\]
it follows from equations \eqref{eq:aevolschem}, \eqref{eq:sevolschem}, \eqref{eq:hanorm}, and \eqref{eq:globalshi} that $|\nabla S|$ and 
$|\nabla_a(g^{ab}\nabla_bR - \gt^{ab}\nabt_b\Rt)|$, and hence $\pdt|h|^2$, $\pdt|A|^2$, and $\pdt|S|^2$,
 are uniformly bounded on $M\times[\epsilon, T]$ for any $\epsilon > 0$,
and consequently $e^{-\eta}\,d\mu$-integrable for each $t\in (0, T]$ (although they need not be bounded as $t\to 0$).

Since $\pdt d\mu = -\operatorname{scal}(g(t))\,d\mu$, and the scalar curvature of $g(t)$ is bounded by assumption,
these observations imply that, for fixed $\beta\in (0, 1)$ and $\eta$ as above, the quantity
\[
  \mc{E}(t) \dfn \int_{M}\left(t^{-1}|h|^2 + t^{-\beta}|A|^2 + |S|^2\right)e^{-\eta}\, d\mu
\]
is differentiable on $(0, T]$, and (with the dominated convergence theorem) satisfies $\lim_{t\searrow 0} \mc{E}(t) = 0$.

\subsection{Vanishing of $\mc{E}$}

We claim in fact that $\mc{E}$ vanishes identically on $[0, T]$. This is a consequence of iterating the following result. 

\begin{proposition} There exists $N_0 = N_0(n, K_0, T^*) > 0$ and $T_0 = T_0(n, \beta) \in (0, T]$ such that
$\mc{E}^{\prime}(t) \leq N_0\mc{E}(t)$ for all $t\in (0, T_0]$.  Hence $\mc{E}\equiv 0$ on $(0, T_0]$. 
\end{proposition}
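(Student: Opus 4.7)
The plan is to differentiate $\mc{E}(t)$ under the integral and show that each unfavorable term in the resulting expression either can be absorbed into a favorable negative term or is bounded by $C\mc{E}(t)$. Using $\partial_t\, d\mu = -\operatorname{scal}(g)\, d\mu$ and writing $\phi \dfn t^{-1}|h|^2+t^{-\beta}|A|^2+|S|^2$ for the integrand of $\mc{E}$, one gets
\begin{align*}
\mc{E}'(t) &= -\int_M t^{-2}|h|^2 e^{-\eta}\,d\mu - \beta\int_M t^{-\beta-1}|A|^2 e^{-\eta}\,d\mu\\
&\quad - \int_M \phi\bigl(\partial_t\eta + \operatorname{scal}(g)\bigr)e^{-\eta}\,d\mu\\
&\quad + \int_M \bigl(t^{-1}\partial_t|h|^2 + t^{-\beta}\partial_t|A|^2 + \partial_t|S|^2\bigr)e^{-\eta}\,d\mu.
\end{align*}
The first three contributions supply the favorable reservoirs of the argument: two strictly negative terms from the derivatives of the weights $t^{-1}, t^{-\beta}$; a term $-\phi\,\partial_t\eta\le 0$ from the cutoff construction (since $\partial_t\eta \ge L_1|\nabla\eta|^2\ge 0$); and a term $-\phi\,\operatorname{scal}(g)$ trivially bounded in absolute value by $CN\mc{E}$. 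The work then concentrates on the final integral.

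For the $|h|^2$ summand, \eqref{eq:hanorm} gives $t^{-1}\partial_t|h|^2 \le Ct^{-1}|h||S|$, and the Young splitting $t^{-1}|h||S| \le \tfrac{1}{4}t^{-2}|h|^2 + C|S|^2$ absorbs the first piece into $-t^{-2}|h|^2$ and leaves the second within $\mc{E}$. For the $|A|^2$ summand, \eqref{eq:hanorm} and \eqref{eq:globalshi} yield $|\partial_t A|\le C(N|h|/\sqrt{t}+N|A|+|\nabla S|)$, which I would handle via the weighted splittings
\[
t^{-\beta}|A||h|t^{-1/2} \le \tfrac{1}{2}t^{(1-\beta)/2}\bigl(t^{-\beta-1}|A|^2+t^{-1}|h|^2\bigr),\qquad t^{-\beta}|A||\nabla S| \le \tfrac{1}{2}\bigl(t^{-\beta-1}|A|^2+t^{1-\beta}|\nabla S|^2\bigr),
\]
so that, after $T_0=T_0(n,\beta)$ is taken small, each $t^{-\beta-1}|A|^2$ contribution is swallowed by $-\beta t^{-\beta-1}|A|^2$ and the prefactor $t^{1-\beta}$ on $|\nabla S|^2$ becomes arbitrarily small. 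For the $|S|^2$ summand, I would use \eqref{eq:snorm} and integrate the Laplacian and divergence pieces by parts against $e^{-\eta}\,d\mu$, generating the crucial favorable term $-2|\nabla S|^2$ together with cross-terms involving $\nabla\eta$ and $U$; these are controlled using \eqref{eq:unorm} and Young's inequality by, for instance,
\[
2|\nabla S||U| \le \tfrac{1}{2}|\nabla S|^2 + CN^2\bigl(t^{-1}|h|^2+|A|^2\bigr),\qquad |S|^2|\nabla\eta|^2 \le L_1^{-1}|S|^2\,\partial_t\eta,
\]
the second of which absorbs the $\nabla\eta$ cross-terms into $-\phi\,\partial_t\eta$ provided $L_1$ is taken large enough. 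Remaining lower-order contributions from \eqref{eq:snorm} are handled by splittings of the form $|S||A|/\sqrt{t} \le \tfrac{1}{2}(t^\beta|S|^2+t^{-\beta-1}|A|^2)$, again absorbing into $-\beta t^{-\beta-1}|A|^2$ and into $\mc{E}$.

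Fixing $L_1$ large (depending only on $n$) and then $T_0=T_0(n,\beta)$ small enough that the $t^{(1-\beta)/2}$ and $t^{1-\beta}$ prefactors fall below the absorption thresholds yields the desired differential inequality $\mc{E}'(t)\le N_0\mc{E}(t)$ on $(0,T_0]$ with $N_0=N_0(n,K_0,T^*)$. Because Section~\ref{ssec:diffE} established that $\mc{E}(\epsilon)\to 0$ as $\epsilon\searrow 0$, applying Gronwall's inequality on $[\epsilon,T_0]$ and letting $\epsilon\searrow 0$ gives $\mc{E}\equiv 0$ on $(0,T_0]$. The main obstacle I anticipate is the combined bookkeeping of all the Young splittings: each quadratic reservoir $-t^{-2}|h|^2$, $-\beta t^{-\beta-1}|A|^2$, $-2|\nabla S|^2$, $-\phi\,\partial_t\eta$ must simultaneously absorb its share of positive contributions under a single consistent choice of the small parameters, and the exponents on $t$ in each splitting must be arranged so that every surviving coefficient is bounded rather than singular at $t=0$.
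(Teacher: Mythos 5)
Your overall strategy coincides with the paper's: differentiate $\mc{E}$ under the integral, exploit the negative terms coming from the weights $t^{-1}$, $t^{-\beta}$ and from the cutoff property $\partial_t\eta \geq L_1|\nabla\eta|^2$, integrate the $\Delta S + \operatorname{div}U$ contribution by parts against $e^{-\eta}\,d\mu$ to create a $-\int|\nabla S|^2$ reservoir, control $U$ via \eqref{eq:unorm} and \eqref{eq:globalshi}, and finish with Gronwall using $\mc{E}(t)\to 0$. The problem is in the bookkeeping itself, which you correctly identify as the crux but do not carry out consistently: your splitting $t^{-\beta}|A||\nabla S| \le \tfrac{1}{2}\bigl(t^{-\beta-1}|A|^2+t^{1-\beta}|\nabla S|^2\bigr)$ does not close as written. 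Once the dimensional constant $C=C(n)$ from \eqref{eq:hanorm} is reinstated, the first piece is $\tfrac{C}{2}\,t^{-\beta-1}|A|^2$ with \emph{no} positive power of $t$ attached; it cannot be swallowed by the reservoir $-\beta t^{-\beta-1}|A|^2$ (there is no reason to have $C\le\beta$, and $\beta<1$), and being singular as $t\to 0$ it cannot be folded into $N_0\mc{E}$ either, so taking $T_0$ small does not help at this point, contrary to your claim. Moreover the same reservoir is simultaneously being asked to absorb the $|A|^2$-share of your $|h||A|$ splitting, whose coefficient is $N t^{(1-\beta)/2}$ with $N=N(n,K_0,T^*)$, so even after repair your $T_0$ would depend on $K_0$, not only on $(n,\beta)$ as the statement asserts (harmless for the uniqueness conclusion, but not what you claim to prove).

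Both defects are repairable by reweighting Young's inequality, and the paper's arrangement shows how. For the gradient cross term the paper uses $Ct^{-\beta}|\nabla S||A| \le Ct^{-2\beta}|A|^2 + |\nabla S|^2$: relative to the $\mc{I}$-integrand $t^{-\beta}|A|^2$ the first piece carries $Ct^{-\beta} = t^{-1}\cdot Ct^{1-\beta}$, which is beaten by $-\beta t^{-1}$ once $T_0=T_0(n,\beta)$ is small, while the $|\nabla S|^2$ piece has coefficient exactly one and is cancelled by the single unit of $-\mc{J}$ deliberately left over from the integration by parts in the $\mc{G}'$ computation (it produces $-2\mc{J}$ and spends only one $\mc{J}$ on the $U$ and $\nabla\eta$ cross terms). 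Alternatively you could keep your shape via $2Ct^{-\beta}|A||\nabla S| \le \tfrac{\beta}{2}t^{-\beta-1}|A|^2 + \tfrac{2C^2}{\beta}\,t^{1-\beta}|\nabla S|^2$ and let the leftover gradient reservoir absorb the second piece for $T_0$ small depending on $n,\beta$. For the $|h||A|$ term the paper bounds $Nt^{-1/2-\beta}|h||A| \le Nt^{-1}|h|^2 + Ct^{-2\beta}|A|^2$, placing the $K_0$-dependent constant on $t^{-1}|h|^2$, which is just the $\mc{H}$-integrand and hence part of $N\mc{E}$, and keeping only a $C(n)$-coefficient on the singular $|A|^2$ side; this is what makes $T_0$ depend only on $n$ and $\beta$. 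With these adjustments your argument becomes the paper's proof; as written, the absorption step for the $A$-equation fails.
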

\begin{proof} For $t\in (0, T]$ and $\alpha \in (0, 1)$,  define
\begin{align*}
  \mc{G}(t) &\dfn \int_M|S|^2e^{-\eta}\,d\mu, \quad \mc{H}(t) \dfn t^{-1}\int_{M}|h|^2e^{-\eta}\,d\mu,\\
  \mc{I}(t) &\dfn t^{-\beta}\int_{M}|A|^2e^{-\eta}\,d\mu,
  \quad\mbox{and}\quad \mc{J}(t) \dfn \int_M|\nabla S|^2e^{-\eta}\,d\mu,
\end{align*}
so $\mc{E}(t) = \mc{G}(t) + \mc{H}(t) + \mc{I}(t)$.
In view of the discussion in Section \ref{ssec:diffE}, we may freely differentiate under the integral sign and integrate by parts
about any $0 < t \leq T$.  As before, in the estimates below, $C$ will denote a series of constants depending only on $n$,
and $N$ a series of constants depending at most on $n$, $\beta$, $K_0$, and $T^*$.
Taking into account the time dependency of the norms $|\cdot| = |\cdot|_{g(t)}$
and the measure $d\mu = d\mu_{g(t)}$, and using \eqref{eq:snorm} together with \eqref{eq:globalshi} we have
\begin{align*}
      \mc{G}^{\prime} &\leq N \mc{G} + \int_M \left(2\left\langle \pd{S}{t}, S\right\rangle - \pd{\eta}{t}|S|^2\right)e^{-\eta}\,d\mu\\
      \begin{split}
			 &\leq N \mc{G} + \int_M\bigg(2\left\langle \Delta S + \operatorname{div} U, S\right\rangle+ C|\gt^{-1}||\nabt\Rt||A||S| 
				    + C|\gt^{-1}||\Rt|^2|h||S|\\
	  &\phantom{\leq N \mc{G}\int_M\bigg(2\langle \Delta S} + (|R|+|\Rt|)|S|^2 -\pd{\eta}{t}|S|^2\bigg)e^{-\eta}\,d\mu
      \end{split}\\	     
      \begin{split}
			 &\leq N \mc{G} + \int_M\left(2\left\langle \Delta S + \operatorname{div} U, S\right\rangle + Nt^{-1/2}|A||S|+ N|h||S|
					 -\pd{\eta}{t}|S|^2\right)e^{-\eta}\,d\mu
      \end{split}\\
        \begin{split}
 			 &\leq N \mc{G} + t\mc{H} + t^{\beta - 1}\mc{I} +
		  \int_M\left(2\left\langle \Delta S + \operatorname{div} U, S\right\rangle -\pd{\eta}{t}|S|^2\right)e^{-\eta}\,d\mu,
       \end{split}	  
\end{align*}
for $t > 0$, where we have estimated 
\[
    Nt^{-1/2}|A||S| \leq t^{\beta-1}(t^{-\beta}|A|^2) + N|S|^2 \quad\mbox{and}\quad N|h||S| \leq t(t^{-1})|h|^2 + N|S|^2,
\]
to obtain the third line.
We can then integrate by parts in the last integral to obtain
 \begin{align*}
 \begin{split} 	 
 	     &\int_M\left(2\left\langle \Delta S + \operatorname{div}{U}, S\right\rangle -\pd{\eta}{t}|S|^2\right)e^{-\eta}\, d\mu\\
 	     &\qquad\leq - 2\mc{J} + \int_M\left( 2|\nabla\eta||\nabla S||S| + 2|\nabla S||U| + 2|\nabla \eta||U||S| -\pd{\eta}{t}|S|^2\right)e^{-\eta}\, d\mu\\
 	     &\qquad\leq -\mc{J}   + \int_M\left(\left(3|\nabla\eta|^2 -\pd{\eta}{t}\right)|S|^2 + 3|U|^2\right)e^{-\eta}\,d\mu.      
 \end{split}
 \end{align*}
Here we have estimated
\[
  2|\nabla\eta||\nabla S||S|+ 2|\nabla S||U|\leq |\nabla S|^2 + 2|\nabla\eta|^2|S|^2 +2|U|^2,
\]
and
\[ 
  2|\nabla \eta||U||S| \leq |\nabla \eta|^2|S|^2 + |U|^2.
\]
Since $|U|^2 \leq Nt^{-1}|h|^2 + N|A|^2$ by \eqref{eq:unorm} and \eqref{eq:globalshi}, and $\pd{\eta}{t} \geq 3|\nabla\eta|^2$ by assumption,
putting everything together, we have
\begin{align}
\nonumber
  \mc{G}^{\prime} &\leq N \mc{G} + (t + N)\mc{H} + (t^{\beta - 1} + Nt^{\beta})\mc{I}  - \mc{J}\\
 		 &\leq N\mc{G} + N\mc{H} + (t^{\beta -1}+ N)\mc{I} - \mc{J},	 \label{eq:gprime}	  
 \end{align}
for any $t \in (0, T]$, using $t\leq T^*$, and $t^{\beta} \leq (T^*)^{\beta}$.	

Similarly, with \eqref{eq:hanorm} and \eqref{eq:globalshi}, we compute that
\begin{align}\nonumber
  \mc{H}^{\prime} &\leq (N -t^{-1})\mc{H} + t^{-1}\int_M\left( 2\left\langle \pd{h}{t}, h\right\rangle -\pd{\eta}{t}\right)e^{-\eta}\,d\mu\\
      \nonumber   &\leq (N -t^{-1})\mc{H} + \int_M Ct^{-1}|S||h|e^{-\eta}\,d\mu\\
		    &\leq  (N -(1/2)t^{-1})\mc{H}  + C\mc{G} \label{eq:h1prime}
\end{align}
where we have used that $\pd{\eta}{t} \geq 0$ and $Ct^{-1}|S||h|\leq (1/2)t^{-2}|h|^2 + C|S|^2$

Finally, and using \eqref{eq:hanorm} and \eqref{eq:globalshi} again, we 
\begin{align}\nonumber
  \mc{I}^{\prime} &\leq (N-\beta t^{-1})\mc{I} + t^{-\beta}\int_M\left( 2\left\langle \pd{A}{t}, A\right\rangle -\pd{\eta}{t}|A|^2\right)e^{-\eta}\,d\mu\\
  \nonumber &\leq \leq (N-\beta t^{-1})\mc{I} + t^{-\beta}\int_MC\left(|\gt^{-1}||\nabt\Rt||h| + |\Rt||A| + |\nabla S|\right)|A|e^{-\eta}\,d\mu\\
\nonumber		    &\leq (N-\beta t^{-1})\mc{I} + \int_M\left( Nt^{-1/2-\beta}|h||A| + Ct^{-\beta}|\nabla S||A|\right)e^{-\eta}\,d\mu\\
\label{eq:h2prime}		    &\leq N\mc{H} + (N-\beta t^{-1} + Ct^{-\beta})\mc{I} + \mc{J}
\end{align}
where we have again used that $\pd\eta{t} \geq 0$ and have estimated
\[
       Nt^{-1/2-\beta}|h||A| + Ct^{-\beta}|\nabla S||A| \leq Nt^{-1}|h|^2 + Ct^{-2\beta}|A|^2 + |\nabla S|^2.
\]

Combining \eqref{eq:gprime}, \eqref{eq:h1prime}, and \eqref{eq:h2prime},
we obtain that, for any $t\in (0, T]$,
\[
      \mc{E}^{\prime}(t) \leq N\mc{E}(t) -(1/2)t^{-1}\mc{H}(t) - t^{-1}(\beta - t^{\beta} + Ct^{1-\beta})\mc{I}(t).			     
\]
Thus, for $T_0$ sufficiently small depending only on $\beta$ and $C = C(n)$, and for some $N_0 = N_0(n, K_0, \beta, T^*)$ sufficiently large, 
we have $\mc{E}^{\prime}(t) \leq N_0 \mc{E}(t)$ on $(0, T_0]$.
Since $\lim_{t\searrow 0} \mc{E}(t) = 0$, it follows from Gronwall's inequality that $\mc{E}\equiv 0$ on $[0, T_0]$.
\end{proof}

\section{The case of potentially unbounded curvature}

In this section, we reduce Theorem \ref{thm:quadgrowth} to a special case of a general result, Theorem \ref{thm:ext},
in the following section. The strategy is essentially the same as that of the bounded curvature setting,
but here we will need to organize our estimates more carefully in order to
`squeeze' the differential inequality satisfied by our energy quantity sufficiently to absorb the growth of coefficients
that we were able to regard as uniformly bounded in our previous computations.
\subsection{Derivative estimates and consequences}
First, we recall the following refined form of
Shi's local first derivative estimate (due to Hamilton, \cite{HamiltonSingularities})
in which the dependencies of the bound on the local curvature bound, the radius of the ball,
and the elapsed time are explicit.

\begin{theorem}[Shi/Hamilton] \label{thm:shi} Suppose that $(M, g_0)$ is an open Riemannian manifold in which, for a given $x_0\in M$ and $r > 0$, 
the closure of $B_{g_0}(x_0, r)$ is compactly contained. Then, if $g(t)$ is a solution to \eqref{eq:rf} on $M\times [0, T]$ with 
\begin{equation*}
    \sup_{B_{g_0}(x_0, r)\times[0, T]}|R|_{g(t)} \leq K_0,
\end{equation*}
there exists a constant $C = C(n)$ such that
\begin{equation*}
 |\nabla R|_{g(t)} \leq CK_0\left(\frac{1}{r^2} + \frac{1}{t} + K_0\right)^{1/2},
\end{equation*}
on $B_{g_0}(x_0, r/2)\times (0, T]$.
\end{theorem}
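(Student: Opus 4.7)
The plan is to use the Bernstein-type maximum principle technique originally due to Shi, in Hamilton's local/sharp-constants formulation. First I would record the evolution inequalities for curvature scalars on any Ricci flow:
\begin{align*}
    \heat |R|^2 &\leq -2|\nabla R|^2 + C|R|^3,\\
    \heat |\nabla R|^2 &\leq -2|\nabla\nabla R|^2 + C|R||\nabla R|^2,
\end{align*}
where $C = C(n)$. The second inequality alone does not close up because of the term $C|R||\nabla R|^2$, which is uniformly bounded by $CK_0|\nabla R|^2$ on the ball but whose combination with the spatial cutoff gradient is what forces the $1/r^2$ and $1/t$ terms in the final estimate.

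Next I would introduce a \emph{Bernstein quantity} of the form
\begin{equation*}
    F \dfn t\, \phi\, (M + |R|^2)|\nabla R|^2,
\end{equation*}
where $M\sim K_0^2$ is a constant chosen below and $\phi$ is a smooth spatial cutoff built from the $g_0$-distance: $\phi(x) = \psi(r_0(x)/r)$ where $\psi$ is smooth, $\equiv 1$ on $[0,1/2]$, $\equiv 0$ on $[1,\infty)$, with $|\psi'|^2/\psi$ and $|\psi''|$ bounded. Because $|R|_{g(t)}\leq K_0$ on the ball, the Ricci flow equation gives $e^{-2K_0 t}g_0\leq g(t)\leq e^{2K_0 t}g_0$ there, so the cutoff still satisfies the pointwise bounds $|\nabla\phi|_{g(t)}^2\leq C(n)/(r^2\phi)$ and $|\Delta_{g(t)}\phi|\leq C(n)/r^2$ (the usual Calabi trick handles the $g_0$-cut locus). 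Then computing $\heat F$ via the product rule and using that the cross term $-2\langle\nabla|R|^2,\nabla|\nabla R|^2\rangle$ can be absorbed by Cauchy-Schwarz into the favorable $-2(M+|R|^2)|\nabla\nabla R|^2$ provided $M$ is taken a sufficiently large multiple of $K_0^2$, one arrives at a pointwise inequality of the form
\begin{equation*}
    \heat F \leq \frac{F}{t} + CK_0 F + \frac{C}{r^2}\,t(M+|R|^2)|\nabla R|^2 - c\,t\,\phi\,|\nabla R|^4
\end{equation*}
for dimensional constants $C$ and $c>0$; here the $-ct\phi|\nabla R|^4$ arising from the $-2|\nabla R|^2$ in $\heat |R|^2$ (multiplied by $|\nabla R|^2$) is the essential coercive term.

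The final step is to apply the parabolic maximum principle on $\overline{B_{g_0}(x_0,r)}\times[0,T]$. The quantity $F$ vanishes on $\{t=0\}$ and on $\partial B_{g_0}(x_0,r)$, so a positive maximum must be interior; at such a point $\heat F\geq 0$, so the inequality above forces
\begin{equation*}
    c\,t\,\phi\,|\nabla R|^4 \leq \frac{F}{t} + CK_0 F + \frac{C}{r^2}\,t(M+|R|^2)|\nabla R|^2
\end{equation*}
at the maximum. Using $M + |R|^2\leq CK_0^2$, this converts into $F\leq C K_0^2(1+K_0 t + t/r^2)$ at the max, and since $\phi = 1$ on $B_{g_0}(x_0,r/2)$ this gives the claimed bound
\begin{equation*}
    |\nabla R|_{g(t)}^2 \leq CK_0^2\Big(\tfrac{1}{r^2}+\tfrac{1}{t}+K_0\Big).
\end{equation*}

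The main obstacle I anticipate is the bookkeeping of the Bernstein quantity $F$: choosing $M$ large enough (as a multiple of $K_0^2$) so that the cross term $-2\langle\nabla|R|^2,\nabla|\nabla R|^2\rangle$ is truly dominated by $(M+|R|^2)|\nabla\nabla R|^2$, while simultaneously ensuring that the error coming from the spatial cutoff $\phi$ (with factors $C/(r^2\phi)$ from $|\nabla\phi|^2/\phi$) can be absorbed by a fraction of the good term $ct\phi|\nabla R|^4$ via Young's inequality. A secondary but routine issue is the non-smoothness of $r_0$ on the cut locus, which is standardly handled via Calabi's support-function argument.
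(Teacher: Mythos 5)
The paper does not prove this theorem at all --- it quotes it from Hamilton \cite{HamiltonSingularities} (cf.\ Shi \cite{Shi}) --- so your proposal can only be measured against the standard Shi/Hamilton argument, and indeed you have reproduced its skeleton correctly: the Bernstein quantity $(M+|R|^2)|\nabla R|^2$ with $M\sim K_0^2$, the factor of $t$, a distance cutoff, and the maximum principle, with the cross term absorbed into $-2(M+|R|^2)|\nabla\nabla R|^2$ and the coercive $-ct\phi|\nabla R|^4$ doing the real work. That part is fine.

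The genuine gap is your claim that the metric equivalence $e^{-2K_0t}g_0\leq g(t)\leq e^{2K_0t}g_0$ yields $|\Delta_{g(t)}\phi|\leq C(n)/r^2$. Equivalence of the metrics controls $|\nabla\phi|_{g(t)}$, but $\Delta_{g(t)}\phi$ involves the Christoffel symbols of $g(t)$, i.e.\ \emph{first} derivatives of the metric, which are not controlled by a two-sided metric bound. Estimating $\Gamma(g(t))-\Gamma(g_0)$ requires integrating $\partial_t\Gamma\sim\nabla\Rc(g(t))$ in time, which is precisely the quantity you are trying to bound --- the argument as written is circular at this point. (Even at $t=0$ the bound is not purely dimensional: one needs Laplacian comparison for $g_0$ on the ball, which under $|R|_{g_0}\leq K_0$ gives $\Delta_{g_0}r_0\leq (n-1)(1/r_0+\sqrt{K_0})$, so an extra $\sqrt{K_0}/r\leq K_0+1/r^2$ appears; that term is harmless, but it should be accounted for.) The known proofs deal with this head-on: Shi's original argument is structured so that only $|\nabla\phi|^2/\phi$ is needed, or one uses the \emph{evolving} distance function together with Laplacian comparison for $g(t)$ (the local curvature bound gives the needed Ricci lower bound along minimizing $g(t)$-geodesics, which remain in the ball, and also controls $\partial_t d_{g(t)}$), or one runs a bootstrap in which an a priori bound on $\nabla R$ on a slightly larger region is assumed and self-improved. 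Some such device must be inserted where you invoke $|\Delta_{g(t)}\phi|\leq C(n)/r^2$. A secondary point: the cutoff bounds and the metric comparison introduce factors $e^{CK_0T}$, so to obtain a constant depending only on $n$ as stated one should apply the estimate on time intervals of length $\min\{t,1/K_0\}$ ending at $t$, absorbing the reciprocal elapsed time into $1/t+K_0$; your write-up omits this reduction.
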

We can use this result on a series of domains to obtain a derivative bound for solutions satisfying assumptions
of the general form of those in Theorem \ref{thm:quadgrowth}.

\begin{corollary} Suppose $(M, g_0)$ is a complete noncompact Riemannian manifold. If $g(t)$ is a smooth family of complete metrics solving \eqref{eq:rf}
and satisfying
\[
  t^{\delta}|R|_{g(t)}(x, t) \leq K_0(r_0^2(x)+1)
\]
on $M\times [0, T]$ for some constant $K$ and $\delta \in [0, 1]$, where $r_0(x) = \operatorname{dist}_{g_0}(x, x_0)$ for some $x_0 \in M$,
then there exists a constant $K = K(n, \delta, K_0, T^*)$
such that
\begin{equation}\label{eq:localderbound}
   t^{\delta + 1/2}|\nabla R|_{g(t)}(x,t ) \leq K (r_0^2(x)+1)^{3/2}.
\end{equation}
\end{corollary}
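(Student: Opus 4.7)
The strategy is to apply Theorem~\ref{thm:shi} at $(x, t)$ on a parabolic cylinder whose spatial and temporal scales are matched to the curvature magnitude predicted by the hypothesis,
\[
\Lambda := K_0 t^{-\delta}(r_0^2(x) + 1),
\]
and to verify the claimed polynomial growth by multiplying the resulting estimate by $t^{\delta+1/2}$. The time interval is restricted to $[t - \tau, t]$ with $\tau := \min\{\Lambda^{-1}, t/2\}$ to sidestep the $s^{-\delta}$ singularity of the hypothesis near $s = 0$.

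Fix $(x, t) \in M \times (0, T]$. First, for any $y \in B_{g_0}(x, 1)$ the triangle inequality gives $r_0(y) \leq r_0(x) + 1$, and the restriction $s \geq t - \tau \geq t/2$ absorbs the singular factor to yield
\[
|R|_{g(s)}(y) \leq K_0 s^{-\delta}(r_0^2(y) + 1) \leq C \Lambda.
\]
Moreover, the choice of $\tau$ guarantees $\Lambda \tau \leq 1$, so integrating $\partial_s g = -2 \Rc$ over $[t - \tau, t]$ keeps the metric ratio $g(s)/g(t - \tau)$ within a universal constant throughout this subinterval.

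Second, apply Theorem~\ref{thm:shi} to the time-translated flow on $[t - \tau, t]$ over a ball of radius $r := \min\{1, \Lambda^{-1/2}\}$ matched to the parabolic scale. Subject to the technical caveat noted below, this yields
\[
|\nabla R|_{g(t)}(x) \leq C \Lambda \left(\tfrac{1}{r^2} + \tfrac{1}{\tau} + \Lambda\right)^{1/2} \leq C\left(\Lambda^{3/2} + \Lambda/\sqrt{t}\right),
\]
whichever of the three terms under the square root dominates. Multiplying by $t^{\delta + 1/2}$, substituting the definition of $\Lambda$, and using $t \leq T^*$ together with $\delta \in [0, 1)$, one obtains
\[
t^{\delta + 1/2}|\nabla R|_{g(t)}(x) \leq C\left(t^{(1-\delta)/2} K_0^{3/2}(r_0^2+1)^{3/2} + t^\delta K_0 (r_0^2+1)\right) \leq K (r_0^2(x) + 1)^{3/2}
\]
with $K = K(n, \delta, K_0, T^*)$, which is \eqref{eq:localderbound}.

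The principal technical obstacle is this application of Theorem~\ref{thm:shi}: the theorem as stated uses a ball in the initial metric $g_0$, whereas in the shifted setting the natural ball lies in $g(t - \tau)$, and a direct distance comparison between these two metrics is spoiled by the $s^{-\delta}$ singularity of the curvature hypothesis on $[0, t - \tau]$, which would introduce exponential-in-$r_0^2$ factors. One circumvents this by rerunning the cutoff-and-maximum-principle argument underlying Shi's estimate with a cutoff anchored directly in the intermediate reference metric $g(t - \tau)$: on $[t - \tau, t]$ the ratio $g(s)/g(t - \tau)$ is uniformly bounded by construction, so this cutoff has uniformly bounded $g(s)$-gradient and Shi's argument goes through with only a benign adjustment of the constants.
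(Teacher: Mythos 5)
Your reduction to Theorem \ref{thm:shi} on curvature-scale-adapted cylinders centered at $x$ (with $\Lambda = K_0t^{-\delta}(r_0^2(x)+1)$, $\tau=\min\{\Lambda^{-1},t/2\}$, $r=\min\{1,\Lambda^{-1/2}\}$) is a genuinely different, scaling-natural route, and the arithmetic after the application of the theorem does yield \eqref{eq:localderbound}. The gap is precisely in the step you flag as the principal obstacle, and your proposed repair does not close it. Re-running Shi's cutoff/maximum-principle argument with a cutoff anchored in $g(t-\tau)$ only fixes the \emph{regularity} of the cutoff with respect to the evolving metrics on $[t-\tau,t]$ (where $g(s)\sim g(t-\tau)$ uniformly, as you note); it does nothing about the \emph{support} of that cutoff. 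The uniform bound $|R|\leq C\Lambda$ has been established only on the $g_0$-ball $B_{g_0}(x,1)$, and a $g(t-\tau)$-ball about $x$ of radius $\min\{1,\Lambda^{-1/2}\}$ need not be contained in $B_{g_0}(x,1)$: the only comparison between $g_0$ and $g(t-\tau)$ available from the hypothesis comes from integrating $|\Rc|\leq K_0s^{-\delta}(r_0^2+1)$ over $[0,t-\tau]$, which gives a distortion of order $e^{CK_0t^{1-\delta}r_0^2(x)}$ --- exactly the exponential-in-$r_0^2$ loss you set out to avoid. Outside $B_{g_0}(x,1)$ the hypothesis only bounds $|R|$ by a quantity growing with $r_0(y)^2$, so the maximum-principle argument, which needs the curvature bound wherever the cutoff is supported, does not go through ``with only a benign adjustment of the constants''; and shrinking the $g(t-\tau)$-radius until containment is guaranteed feeds the factor $e^{CK_0t^{1-\delta}r_0^2(x)}$ into the $r^{-2}$ term and destroys the polynomial bound. (Anchoring the cutoff in $g_0$ instead reintroduces the same exponential through $|\nabla\phi|^2_{g(s)}$.)

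For contrast, the paper never works at the curvature scale at all: it fixes $t_0$, sets $\epsilon=t_0/2$, and applies Theorem \ref{thm:shi} on $[\epsilon,T]$ over the large balls $B_{g_0}(x_0,r)$ centered at the base point, with $r\sim 2r_0(x)$ for the pointwise statement. There the hypothesis applies verbatim with curvature bound $\epsilon^{-\delta}K_0(r^2+1)$, which by the quadratic-growth structure is comparable to the pointwise bound at $x$, the $r^{-2}$ term is harmless since $r\geq 1$, and the elapsed time $t_0-\epsilon=t_0/2$ plays the role of your $\tau$. If you wish to keep your point-centered small-ball strategy, you need an additional input guaranteeing that a $g(t-\tau)$-ball of radius comparable to $\min\{1,\Lambda^{-1/2}\}$ about $x$ remains in the region where $|R|\lesssim\Lambda$ --- for instance the uniform equivalence \eqref{eq:unifequiv}, which is in force in the setting where this corollary is actually applied (at the cost of a $\gamma$-dependence in $K$), or a genuine local distance-distortion estimate; as written, that step is unsupported.
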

\begin{proof}
  Let $t_0 \in (0, T]$ and $\epsilon \dfn t_0/2$. For any $r > 0$, we have 
\[
      |R(x, t)|\leq \epsilon^{-\delta}K_0(r^2 + 1)
\]
on $B_{g_0}(x_0, r) \times [\epsilon, T]$, and so by Theorem \ref{thm:shi},
there is a $C = C(n)$ such that 
\[
   t^{\delta+1/2}|\nabla R|(x, t) \leq C\left(\frac{t}{\epsilon}\right)^{\delta}K_0(r^2 + 1)\left(\frac{t}{r^2} + \frac{t}{t-\epsilon} + \frac{t}{\epsilon^{\delta}}
		K_0(r^2 + 1)\right)^{1/2},
\]
on $B_{g_0}(x_0, r/2) \times (\epsilon, T]$.  Thus, for any $r \geq 1$, we have
\begin{align*}
 t_0^{\delta + 1/2}|\nabla R|(x, t_0) &\leq C2^{\delta}K_0(r^2 + 1)
\left(t_0 + 2 + t_0^{1-\delta}2^{\delta}K_0(r^2 + 1)\right)^{1/2}\\
  &\leq K^{\prime}(r^2+1)^{3/2},
\end{align*}
for some $K^{\prime} = K^{\prime}(n, \delta, K_0, T^*)$ on $B_{g_0}(x_0, r/2)$.
Since $t_0\in (0, T]$ and $r\geq 1$ were arbitrary, this implies that
\[
  \sup_{B_{g_0}(x_0, r)\times [0, T]} t^{\delta + 1/2}|\nabla R|(x, t)\leq 8K^{\prime}(r^2 + 1)^{3/2}
\]
for any $r \geq 1$.

 For the pointwise estimate, we note that if $x\in B_{g_0}(x_0, 1)$, we have
\[
  \frac{t^{\delta + 1/2}}{(r_0^2(x) + 1)^{3/2}}|\nabla R|(x, t) \leq  t^{\delta + 1/2}|\nabla R|(x, t) \leq 16K^{\prime},
\]
and if $x\in M\setminus B_{g_0}(x_0, 1)$, we have
\begin{align*}
 \frac{t^{\delta + 1/2}}{(r_0^2(x) + 1)^{3/2}}|\nabla R|(x, t) &\leq  \frac{t^{\delta + 1/2}}{(r_0^2(x) + 1)^{3/2}}
   \left\{\sup_{y\in B_{g_0}(x_0, 2r_0(x))}|\nabla R|(y, t)\right\}\\  
 &\leq 8K^{\prime}\left(\frac{4r_0^2(x) + 1}{r_0^2(x) + 1}\right)^{3/2} \leq 64K^{\prime}.
\end{align*}
This verifies \eqref{eq:localderbound} with $K = 64K^{\prime}$.
\end{proof} 

In Theorem \ref{thm:quadgrowth}, we assume that $g(t)$ and $\gt(t)$ are uniformly equivalent
to $g_0$, and so, effectively, that both $|R(x, t)|_{g_0}$ and $|\Rt(x, t)|_{g_0}$ have
at most quadratic growth in $r_0(x)$. With an argument exactly analogous to Lemma \ref{lem:hadecay},
and using \eqref{eq:localderbound},
we could then obtain global bounds of the form $|h(x, t)|\leq Nt^{1-\delta}(r_0^2(x)+1)$
and $|A(x, t)|\leq Nt^{1/2-\delta}(r_0^2(x)+1)^{3/2}$ on the decay of $h$ and $A$ as $t\searrow 0$. 
In fact, since the proof of Theorem \ref{thm:ext} below is based on localized energy quantities,
we will not need global estimates on either $h$ or $A$, and instead we just note that (since $g(t)$
and $\gt(t)$ are assumed to be smooth solutions which agree at $t =0$) we have naive estimates of the form $|h(x, t)|\leq Pt$
and $|A(x, t)|\leq Pt$ on $\Omega\times [0, T]$ for some $P = P(\Omega, g, \gt)$ and
any compact $\Omega \subset M$. 

\begin{lemma}\label{lem:hadecay2} For any $r > 0$, 
there exists a constant $P$ depending on $\gamma$ and the maximum values of
$|R|_{g_0}$, $|\Rt|_{g_0}$, $|\nabla R|_{g_0}$
and $|\nabt\Rt|_{g_0}$ on $\overline{B_{g_0}(x_0, r)}\times [0, T]$ such that
\begin{equation}\label{eq:adecay2}
 \sup_{x\in B_{g_0}(x_0, r)} \left(|h(x, t)| + |A(x, t)|\right)\leq Pt.
\end{equation}
\end{lemma}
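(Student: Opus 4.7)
The plan is to mimic the proof of Lemma \ref{lem:hadecay}, but to replace the global Shi--Bando derivative estimates (which only gave a $t^{-1/2}$ bound and forced the $\sqrt{t}$ decay for $A$) by the now-assumed pointwise bounds on $|R|_{g_0}$, $|\Rt|_{g_0}$, $|\nabla R|_{g_0}$, and $|\nabt\Rt|_{g_0}$ on the compact set $\overline{B_{g_0}(x_0, r)} \times [0, T]$. Since $g_0$ is complete the closed ball is compact, so by smoothness of $g(t)$ and $\gt(t)$ all four of these quantities are finite there; denote a common upper bound by $Q$.

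First I would estimate $h$. From $\pdt h_{ij} = -2(R_{ij} - \tilde{R}_{ij})$ and the uniform equivalence \eqref{eq:unifequiv}, which allows me to replace $g(t)$- and $\gt(t)$-norms on the Ricci tensors by $g_0$-norms at the cost of a fixed power of $\gamma$, I get a pointwise bound of the form $|\pdt h|_{g_0} \leq C\gamma^2 Q$ on $\overline{B_{g_0}(x_0,r)}\times[0,T]$. Since $h(\cdot, 0) \equiv 0$, integration in $t$ yields $|h(x,t)|_{g_0} \leq C\gamma^2 Q\, t$, and one more application of \eqref{eq:unifequiv} converts this to the $g(t)$-norm at the expense of another factor of $\gamma$.

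For $A$ I would use the explicit evolution equation \eqref{eq:aevol}, which gives the pointwise estimate $|\pdt A|_{g_0} \leq C\gamma^{5/2}(|\nabla R|_{g_0} + |\nabt \Rt|_{g_0}) \leq C'\gamma^{5/2} Q$; note that here, unlike in Lemma \ref{lem:hadecay}, the integrand is uniformly bounded all the way down to $t = 0$, so no $\epsilon$-cutoff is needed. Because $g(0) = \gt(0) = g_0$, the Christoffel symbols of $g(t)$ and $\gt(t)$ coincide at $t=0$, so $A(\cdot, 0) \equiv 0$ and straight integration in $t$ produces $|A(x,t)|_{g_0} \leq C'\gamma^{5/2} Q\, t$ on the ball, which in turn bounds $|A(x,t)|_{g(t)}$ up to a further factor of $\gamma^{3/2}$.

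Collecting these two estimates and absorbing all combinatorial constants and powers of $\gamma$, $Q$ into a single constant $P$ of the prescribed form yields \eqref{eq:adecay2}. There is no real obstacle here; the lemma serves only to record that the hypotheses of Theorem \ref{thm:quadgrowth} imply the crude pointwise $O(t)$ bounds needed as initial data for the localized energy argument of the next section, and it comes out more cleanly than Lemma \ref{lem:hadecay} precisely because we are now integrating a bounded integrand rather than a $t^{-1/2}$ one.
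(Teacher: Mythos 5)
Your proof is correct and is essentially the paper's argument: bound $|\partial_t h|_{g_0}$ and $|\partial_t A|_{g_0}$ by a constant on the compact set $\overline{B_{g_0}(x_0,r)}\times[0,T]$ using the assumed curvature and derivative bounds together with the uniform equivalence, integrate from $t=0$ where $h$ and $A$ vanish, and convert back to the $g(t)$-norm with powers of $\gamma$. The only (harmless) cosmetic difference is in how the constants are organized — the paper absorbs $|g^{-1}|_{g_0}$ and $|\gt^{-1}|_{g_0}$ into its supremum $\tilde P(r)$ rather than invoking $\gamma$ for the $h$-estimate, where in fact no factor of $\gamma$ is needed since the Ricci tensors are metric-free traces of the $(3,1)$-curvatures.
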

\begin{proof}
Just define
\[
    \tilde{P}(r) \dfn \sup_{\overline{B_{g_0}(x_0, r)}\times [0, T]}\left(|R|_{g_0} + |\Rt|_{g_0}+ |g^{-1}|_{g_0}|\nabla\Rc|_{g_0} + |\gt^{-1}|_{g_0}|\nabt\Rct|_{g_0}\right).
\]
Then, by \eqref{eq:aevol}, for any $r > 0$ and $x \in B_{g_0}(x_0, r)$, we have $|\pd{h}{t}|_{g_0} \leq 2\sqrt{n}\tilde{P}(r)$ and $|\pd{A}{t}|_{g_0} \leq 3\tilde{P}(r)$
and $A(x, 0) = 0$, so  $|h(x, t)|\leq \gamma|h(x, t)|_{g_{0}}\leq 2\sqrt{n}\gamma\tilde{P}t$, and 
$|A(x, t)| \leq \gamma^{3/2}|A(x, t)|_{g_0} \leq 3\gamma^{3/2}\tilde{P}t$ on $B_{g_0}(x_0, r)$.
\end{proof}

\subsection{Evolution inequalities for $h$, $A$, and $S$ revisited}

We next organize the inequalities satisfied by the time-derivatives of
 $t^{-\alpha}|h|^2$, $t^{-\beta}|A|^2$, and $|S|^2$ so that the coefficients
$|R|$, $|\Rt|$, $|\nabla R|$ and $|\nabt\Rt|$ are distributed across the totality of terms
in a way that their growth can be adequately absorbed. 
Going forward, we will write 
\[
\rho(x) \dfn r_0^2(x) + 1.
\]
\begin{lemma}\label{lem:rawineq} For any solutions $g(t)$, $\gt(t)$ to the Ricci flow on $M\times [0, T]$ and any 
$\alpha$, $\beta$, $\delta$, $\sigma \in \RR{}$,
there exists a constant $C = C(n)$ such that, on $M\times (0, T]$,
\begin{align}\label{eq:hraw}
 &\pdt\left(\frac{\rho^2}{t^{\alpha}}|h|^2\right) \leq \frac{C\rho^2}{t^{\alpha}}\left(|R| +\frac{\rho}{t^{\sigma}} - \frac{\alpha}{t}\right)|h|^2
	      + \frac{C\rho}{t^{\alpha - \sigma}}|S|^2\\
\begin{split}\label{eq:araw}
  &\pdt\left(\frac{\rho}{t^{\beta}}|A|^2\right) \leq \frac{C\rho}{t^\beta}\left(\left(|\gt^{-1}||\Rt| + |R|\right) 
	  +\frac{\rho}{t^{\delta}} |\gt^{-1}|^2 + \frac{\rho}{t^{\beta}} - \frac{\beta}{t}\right)|A|^2\\
	  &\phantom{\pdt\left(\frac{\rho}{t^{\beta}}|A|^2\right) \leq} + \frac{C}{t^{\beta-\delta}}|\nabt\Rt|^2|h|^2+ \frac{1}{4}|\nabla S|^2
\end{split}\\
\begin{split}\label{eq:sraw}
  &\left\langle \pd{S}{t} -\Delta S - \operatorname{div} U, S\right\rangle \leq C|\gt^{-1}|^2|\nabt \Rt|^{4/3}|A|^2
		  + C|\gt^{-1}|^2|\Rt|^3|h|^2\\
  &\phantom{\left\langle \pd{S}{t} -\Delta S - \operatorname{div}_{g(t)}U, S\right\rangle \leq} 
		+ C\left(|\nabt\Rt|^{2/3} + \left(|R|+|\Rt|\right)\right)|S|^2 
\end{split}
\end{align}
and
\begin{equation}\label{eq:uraw}
      |U|^2 \leq C\left(|\gt^{-1}|^2|\nabt\Rt|^2|h|^2 + |\Rt|^2|A|^2\right),
\end{equation}
where $U$ is as in \eqref{eq:udef}.
\end{lemma}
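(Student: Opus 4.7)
The plan is to derive each of the four inequalities directly from the evolution equations and pointwise bounds already recorded in the preliminaries---specifically \eqref{eq:aevolschem}, \eqref{eq:hanorm}, \eqref{eq:snorm}, and \eqref{eq:unorm}---by combining the product rule in $t$ (to handle the weights $\rho^p t^{-\alpha}$) with suitably tuned applications of Young's inequality. No new geometric input is required; the lemma is purely algebraic.

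First I would treat \eqref{eq:hraw}. Since $\rho$ is independent of $t$, the product rule gives $\pdt(\rho^2 t^{-\alpha}|h|^2) = \rho^2 t^{-\alpha}\pdt|h|^2 - \alpha\rho^2 t^{-\alpha-1}|h|^2$. The time derivative of the norm contributes $\pdt|h|^2 \leq 2\langle \pdt h, h\rangle + C|R||h|^2$, where the $C|R||h|^2$ term records the effect of $\pdt g^{ij} = 2R^{ij}$ on the inner product, and $|\langle \pdt h, h\rangle| \leq C|S||h|$ by \eqref{eq:hanorm}. Then the Young split $C|S||h| \leq \tfrac{C\rho}{2t^{\sigma}}|h|^2 + \tfrac{Ct^{\sigma}}{2\rho}|S|^2$ (calibrated with the weight $\rho/t^\sigma$) and multiplication by $\rho^2 t^{-\alpha}$ produces the stated form. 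For \eqref{eq:araw} the template is the same: the product rule together with $|\pdt A| \leq C(|\gt^{-1}||\nabt\Rt||h| + |\Rt||A| + |\nabla S|)$ from \eqref{eq:hanorm} and the norm-derivative contribution $C|R||A|^2$ yield three cross products to split. The term $|\gt^{-1}||\nabt\Rt||h||A|$ I would split via $|h||A| \leq \tfrac{1}{2}(\mu|A|^2 + \mu^{-1}|h|^2)$ with $\mu = \rho t^{-\delta}|\gt^{-1}|/|\nabt\Rt|$, so that after multiplication by the prefactor $C\rho t^{-\beta}|\gt^{-1}||\nabt\Rt|$ the $|A|^2$ coefficient becomes $C\rho^2 t^{-\beta-\delta}|\gt^{-1}|^2$ and the $|h|^2$ coefficient becomes $Ct^{-\beta+\delta}|\nabt\Rt|^2$. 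The term $|\Rt||A|^2$ is already in target form, and $C\rho t^{-\beta}|\nabla S||A| \leq \tfrac{1}{4}|\nabla S|^2 + C^2\rho^2 t^{-2\beta}|A|^2$ isolates the claimed $\tfrac{1}{4}|\nabla S|^2$ and contributes a further $\tfrac{C\rho}{t^\beta}\cdot\tfrac{\rho}{t^\beta}|A|^2$ to the $|A|^2$ coefficient.

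For \eqref{eq:sraw} I would take the inner product of the inequality \eqref{eq:snorm} with $S$ and split the two cross terms $C|\gt^{-1}||\nabt\Rt||A||S|$ and $C|\gt^{-1}||\Rt|^2|h||S|$ by \emph{asymmetric} Young's inequalities. The split $|A||S| \leq \tfrac{1}{2}(\lambda|A|^2 + \lambda^{-1}|S|^2)$ with $\lambda = |\gt^{-1}||\nabt\Rt|^{1/3}$ converts the first product into the advertised $C|\gt^{-1}|^2|\nabt\Rt|^{4/3}|A|^2 + C|\nabt\Rt|^{2/3}|S|^2$, realizing the mixed exponents $4/3$ and $2/3$ that appear in the statement; similarly, $|h||S| \leq \tfrac{1}{2}(\nu|h|^2 + \nu^{-1}|S|^2)$ with $\nu = |\gt^{-1}||\Rt|$ converts the second product into $C|\gt^{-1}|^2|\Rt|^3|h|^2 + C|\Rt||S|^2$, the latter being absorbable into the $(|R|+|\Rt|)|S|^2$ term already present on the right. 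Inequality \eqref{eq:uraw} is immediate from squaring \eqref{eq:unorm} and using $(a+b)^2 \leq 2a^2 + 2b^2$.

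The only step requiring any care is the selection of the exponents $4/3$ and $2/3$ in the split of $|\nabt\Rt||A||S|$ above: this is the unique Young-compatible split that shifts two-thirds of the $|\nabt\Rt|$ weight onto the $|A|^2$ side and leaves only $|\nabt\Rt|^{2/3}$ on the $|S|^2$ side, which is what makes the coefficient of $|S|^2$ sublinear in $|\nabt\Rt|$ and therefore amenable later to absorption using the derivative bound \eqref{eq:localderbound}. Once the weights are chosen in this way, every remaining estimate is a mechanical application of the product rule and AM--GM; the main challenge is simply bookkeeping, ensuring that each of the parameters $\alpha,\beta,\delta,\sigma$ appears in exactly the right place and that the Young constants balance cleanly against the prefactors $\rho^2 t^{-\alpha}$, $\rho t^{-\beta}$, and $1$.
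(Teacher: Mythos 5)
Your proposal is correct and follows essentially the same route as the paper: the product rule in $t$, the pointwise bounds \eqref{eq:hanorm}, \eqref{eq:snorm}, \eqref{eq:unorm} for the evolution terms, the $C|R|\,|\cdot|^2$ contribution from the time-dependence of the $g(t)$-norm, and weighted Young inequalities with exactly the weights the paper uses (in particular the $|\gt^{-1}||\nabt\Rt|^{1/3}$ split producing the $4/3$ and $2/3$ exponents). The only cosmetic difference is that you encode the Young weights as explicit parameters $\mu,\lambda,\nu$ that formally divide by $|\nabt\Rt|$ or $|\Rt|$; this is harmless since the corresponding cross terms vanish when those factors do, but stating the split directly as in the paper avoids even that remark.
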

\begin{proof} 
Since $\pdt|h|^2 \leq C|R||h|^2 + 2\langle\pd{h}{t}, h\rangle$, we find that
\begin{align*}
 \pdt\left(\frac{\rho^2}{t^{\alpha}}|h|^2\right) &\leq \frac{C\rho^2}{t^{\alpha}}\left(|R| - \frac{\alpha}{t}\right)|h|^2
		+\frac{2\rho^2}{t^{\alpha}}\left\langle\pd{h}{t}, h\right\rangle\\
	&\leq \frac{C\rho^2}{t^{\alpha}}\left(|R| - \frac{\alpha}{t}\right)|h|^2 + \frac{C\rho^2}{t^{\alpha}}|S||h|\\
	&\leq \frac{C\rho^2}{t^{\alpha}}\left(|R| +\frac{\rho}{t^{\sigma}} - \frac{\alpha}{t}\right)|h|^2 + \frac{C\rho}{t^{\alpha - \sigma}}|S|^2
\end{align*}
where we have estimated
\[
	  \frac{\rho^2}{t^{\alpha}}|S||h| \leq \frac{C\rho}{t^{\alpha}}\left(\frac{\rho^2}{t^{\sigma}}|h|^2 + t^{\sigma}|S|^2\right).
\]
Likewise, from \eqref{eq:hanorm}, we have
\begin{align*}
 \pdt\left(\frac{\rho}{t^{\beta}}|A|^2\right) &\leq \frac{C\rho}{t^{\beta}}\left(|R| - \frac{\beta}{t}\right)|A|^2
	+ |A|\frac{C\rho}{t^{\beta}}\left(|\gt^{-1}||\nabt\Rt||h| +|\Rt||A| + |\nabla S|\right),\\
\begin{split}
	&\leq \frac{C\rho}{t^{\beta}}\left(\left(|R|+ |\Rt|\right) + \frac{\rho}{t^{\delta}}|\gt^{-1}|^2 + \frac{\rho}{t^{\beta}} - \frac{\beta}{t}\right)|A|^2\\
	&\phantom{\leq} + \frac{C}{t^{\beta -\delta}}|\nabt\Rt|^2|h|^2 + \frac{1}{4}|\nabla S|^2,
\end{split}
\end{align*}
where we have used
\[
    \frac{\rho}{t^{\beta}}|\gt^{-1}||\nabt\Rt||h||A| 
      \leq \frac{C}{t^{\beta}}\left(\frac{\rho^2}{t^{\delta}}|\gt^{-1}|^2|A|^2 + t^{\delta}|\nabt\Rt|^2|h|^2\right),
\]
and
\[
      \frac{C\rho}{t^{\beta}}|A||\nabla S| \leq \frac{C\rho^2}{t^{2\beta}}|A|^2 + \frac{1}{4}|\nabla S|^2.
\]
Next, from \eqref{eq:snorm}, we have
\begin{align*}
 &\left\langle \pd{S}{t} -\Delta S - \operatorname{div} U, S\right\rangle \leq C|S|\left(|\gt^{-1}||\nabt\Rt||A|
	  + |\gt^{-1}||\Rt|^2|h| + \left(|R|+|\Rt|\right)|S|\right)\\
	&\qquad\qquad\leq C\left(|R|+ |\Rt| + |\nabt\Rt|^{2/3}\right)|S|^2 
	    + C|\gt^{-1}|^2|\nabt\Rt|^{4/3}|A|^2 + C|\Rt|^3|h|^2,
\end{align*}
since
\[
  |\gt^{-1}||\nabt\Rt||A||S| \leq C\left(|\gt^{-1}|^2|\nabt\Rt|^{4/3}|A|^2 + |\nabt\Rt|^{2/3}|S|^2\right),
\]
and
\[
    |\gt^{-1}||\Rt|^2|h||S|\leq C\left(|\gt^{-1}|^2|\Rt|^3|h|^2 + |\Rt||S|^2\right).
\]
Finally, from \eqref{eq:unorm}, we have $|U|^2 \leq C\left(|g^{-1}|^2|\nabt\Rt|^2|h|^2 + |\Rt|^2|A|^2\right)$.
\end{proof}

We now specialize to the setting of Theorem \ref{thm:quadgrowth} (except that we continue to permit 
$\delta \in [0, 1]$) and define
\begin{equation}\label{eq:absdef}
    \alpha \dfn (3+\delta)/2, \quad \beta \dfn \alpha - 1 =  (1+\delta)/2,\quad \sigma \dfn \alpha/2 = (3+\delta)/4,
\end{equation}
(so that, in particular, $\alpha \leq 2$ and $\beta$, $\sigma \leq 1$ if $\delta\in [0, 1]$).
Together with the derivative estimate
\eqref{eq:localderbound}, we can put the above inequalities into the following simplified form.
\begin{proposition}\label{prop:refineq}
 Suppose $g_0$ is a complete metric satisfying the volume growth condition \eqref{eq:volgrowth}, 
$g(t)$ and $\gt(t)$ are solutions to \eqref{eq:rf} satisfying \eqref{eq:unifequiv} and the curvature bound \eqref{eq:quadgrowth}
for some $\delta \in [0, 1]$, and $\alpha$, $\beta$, $\sigma$ are as in \eqref{eq:absdef}.
 Then there exists a constant $N = N(n, \gamma, \delta, K, T^{*})$ 
 such that, on $M\times (0, T]$,
\begin{align}\label{eq:hmod}
 \pdt|\bar{h}|^2 &\leq \frac{N\rho}{t^{\sigma}}\left(|\bar{h}|^2
	      + |S|^2\right),\\
\label{eq:amod}
\pdt|\bar{A}|^2 &\leq \frac{N\rho}{t^{\sigma}}\left(|\bar{h}|^2+ |\bar{A}|^2 \right) +  \frac{1}{4}|\nabla S|^2,\\
\label{eq:smod}
  \left\langle \pd{S}{t} -\Delta S - \operatorname{div} U, S\right\rangle &\leq
	\frac{N\rho}{t^{\sigma}}\left(|\bar{h}|^2 + |\bar{A}|^2 
		+ |S|^2\right), 
\end{align}
and
\begin{align}\label{eq:umod}
|U|^2 \leq \frac{N\rho}{t^{\sigma}}\left(|\bar{h}|^2 + |\bar{A}|^2\right),
\end{align}
where $\bar{h} \dfn \rho t^{-\alpha/2}h$, $\bar{A} \dfn \rho^{1/2}t^{-\beta/2}A$,
and $\rho(x) = r_0^2(x) + 1$ are as before.
\end{proposition}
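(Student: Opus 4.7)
The plan is to specialize each of the inequalities in Lemma \ref{lem:rawineq} to the hypotheses of Theorem \ref{thm:quadgrowth} by substituting pointwise bounds on the curvature and its first derivative, and then expressing the results in terms of the rescaled quantities $\bar h$ and $\bar A$. From \eqref{eq:unifequiv} I have $|\gt^{-1}| \leq c(\gamma)$; from \eqref{eq:quadgrowth} I have $|R|+|\Rt| \leq K_0\rho/t^{\delta}$; and from \eqref{eq:localderbound}, applied to both $g(t)$ and $\gt(t)$, I have $|\nabla R| + |\nabt\Rt| \leq K\rho^{3/2}/t^{\delta + 1/2}$.

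First I would substitute these into \eqref{eq:hraw}, \eqref{eq:araw}, \eqref{eq:sraw} and \eqref{eq:uraw}, discarding the nonpositive $-\alpha/t$ and $-\beta/t$ contributions and carrying the $\tfrac14|\nabla S|^2$ term through \eqref{eq:araw} unchanged. Every remaining term on the right-hand side then takes the form of a constant times a power of $\rho$ times a power of $t^{-1}$ multiplying one of $|h|^2$, $|A|^2$, or $|S|^2$. Using the identities $|h|^2 = t^{\alpha}\rho^{-2}|\bar h|^2$ and $|A|^2 = t^{\beta}\rho^{-1}|\bar A|^2$, each such term converts to the form $N\rho t^{-\mu}$ times one of $|\bar h|^2$, $|\bar A|^2$, or $|S|^2$, for a specific exponent $\mu$ determined by the term in question. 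The final step is to verify, in each case, that $\mu \leq \sigma$, so that the factor $t^{\sigma - \mu} \leq (T^*)^{\sigma}$ can be absorbed into $N$.

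The main obstacle is arithmetic rather than conceptual: the exponents $\alpha$, $\beta$, $\sigma$ in \eqref{eq:absdef} are chosen precisely so that each of the inequalities $\mu \leq \sigma$ reduces, after simplification, to the single constraint $\delta \leq 1$. For instance, the $|R||h|^2$ term of \eqref{eq:hraw} yields the requirement $\delta \leq \sigma$; the $|\nabt\Rt|^2|h|^2$ term of \eqref{eq:araw} yields $\beta + \delta + 1 \leq \alpha + \sigma$; the $|\Rt|^3|h|^2$ term of \eqref{eq:sraw} yields $3\delta \leq \alpha + \sigma$; and the $|\gt^{-1}|^2|\nabt\Rt|^{4/3}|A|^2$ term of \eqref{eq:sraw} yields $(4\delta + 2)/3 \leq \beta + \sigma$. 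Each of these simplifies, under \eqref{eq:absdef}, to $\delta \leq 1$, so the full family of estimates closes uniformly under the hypothesis $\delta \in [0, 1]$. Gathering the arising constants into a single $N = N(n, \gamma, \delta, K_0, K, T^*)$ then produces the four inequalities \eqref{eq:hmod}--\eqref{eq:umod}.
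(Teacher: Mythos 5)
Your proposal is correct and takes essentially the same route as the paper: substitute the hypothesized curvature bound, the Shi/Hamilton derivative estimate \eqref{eq:localderbound} (transferred between norms via the uniform equivalence \eqref{eq:unifequiv}), and $|\gt^{-1}|\leq c(\gamma)$ into the raw inequalities of Lemma \ref{lem:rawineq}, rewrite everything in terms of $\bar h$ and $\bar A$, and verify term by term that each exponent of $1/t$ is at most $\sigma=(3+\delta)/4$ precisely when $\delta\in[0,1]$, absorbing the leftover powers of $t\leq T^*$ into $N$. The exponent bookkeeping you exhibit agrees with the paper's computation, so no changes are needed.
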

\begin{proof}
By assumption, we have bounds on $|R|$ and $|\Rt|_{\gt}$, and
from \eqref{eq:localderbound}, estimates on $|\nabla R|$ and $|\nabt\Rt|_{\gt}$. Since
$g(t)$ and $\gt(t)$ are uniformly equivalent, we also have estimates on $|\Rt|$ and $|\nabt\Rt|$, and
we collect all of these estimates here with a common constant $K_1 = K_1(n, \delta, \gamma, K_0, T^*)$:
\[
   \sup_{M\times[0, T]}t^{\delta}\left(|R| + |\Rt|\right) \leq K_1\rho,
  \quad \sup_{M\times[0,T]} t^{\delta+1/2}\left(|\nabla R| + |\nabt\Rt|\right) \leq K_1\rho^{3/2}.
\]
The assumption of uniform equivalence also implies 
\[
|\gt^{-1}| \leq  \gamma |\gt^{-1}|_{g_0} \leq \gamma^2|\gt^{-1}|_{\gt} = \gamma^2\sqrt{n}.
\]
So now it is just a matter of substituting these bounds into \eqref{eq:hraw}, \eqref{eq:araw}, \eqref{eq:sraw}, and \eqref{eq:uraw}.
In what follows we will use $N$ to denote any constant that depends only 
on $n$, $\delta$, $\gamma$,  $K_1$, and $T^{*}$.

First, from \eqref{eq:hraw}, using $\sigma = (3+\delta)/4$, we have
\begin{align}
\nonumber \pdt|\bar{h}|^2 &\leq 
	  \left(|R| +\frac{\rho}{t^{(3+\delta)/4}} - \frac{\alpha}{t}\right)|\bar{h}|^2
	      + \frac{\rho}{t^{\alpha-(3+\delta)/4}}|S|^2\\
\nonumber &\leq \frac{\rho}{t^{(3+\delta)/4}}\left(K_1t^{3(1-\delta)/4}+ 1\right)|\bar{h}|^2 +
	      \frac{C\rho}{t^{(3+\delta)/4}}|S|^2\\
\label{eq:hmod1} &\leq \frac{N\rho}{t^{(3+\delta)/4}}(|\bar{h}|^2 + |S|^2)
\end{align}
for any $t > 0$, since $\alpha =(3+\delta)/2$ and $t^{3(1-\delta)/4} \leq T^{*}$.
Next, from \eqref{eq:araw}, we have
 \begin{align}
 \begin{split}
 \nonumber
   \pdt |\bar{A}|^2 &\leq C\left(\left(|\gt^{-1}||\Rt| + |R|\right) 
 	  +\frac{\rho}{t^{\delta}}|\gt^{-1}|^2 + \frac{\rho}{t^{\beta}} - \frac{\beta}{t}\right)|\bar{A}|^2\\
 	  &\phantom{\leq}+ \frac{Ct^{\alpha - \beta+ \delta}}{\rho^2}|\nabt\Rt|^2|\bar{h}|^2
		    + \frac{1}{4}|\nabla S|^2
 \end{split}\\
\nonumber
 &\leq \frac{N\rho}{t^{(1+\delta)/2}}\left(1 + t^{(1-\delta)/2}\right)|\bar{A}|^2+ \frac{N\rho}{t^{\delta}}|\bar{h}|^2
		    + \frac{1}{4}|\nabla S|^2\\
\label{eq:amod1}
&\leq \frac{N\rho}{t^{(1+\delta)/2}}\left(|\bar{h}|^2 + |\bar{A}|^2\right) + \frac{1}{4}|\nabla S|^2,
 \end{align}
since $\beta - \delta = (1-\delta)/2 \geq 0$, and $\alpha -\beta + \delta - (1 + 2\delta) = -\delta$.
Similarly,
\begin{align}
\begin{split}
\nonumber
  &\left\langle \pd{S}{t} -\Delta S - \operatorname{div} U, S\right\rangle\\
  &\;\;\leq C\bigg\{|\gt^{-1}|^2|\nabt \Rt|^{4/3}|A|^2 + |\gt^{-1}|^2|\Rt|^3|h|^2 +\left(|\nabt\Rt|^{2/3} + \left(|R|+|\Rt|\right)\right)|S|^2\bigg\} 
\end{split}\\
\nonumber
\begin{split}
  &\;\;\leq \frac{N\rho}{t^{(2+4\delta)/3-\beta}}|\bar{A}|^2
		  + \frac{N\rho}{t^{3\delta - \alpha}}|\bar{h}|^2
		+ N\rho\left(\frac{1}{t^{(1+2\delta)/3}} + \frac{1}{t^{\delta}}\right)|S|^2 
\end{split}\\
  \nonumber
\begin{split}
  &\;\;\leq \frac{N\rho}{t^{(1+5\delta)/6}}|\bar{A}|^2
		  + \frac{N\rho}{t^{(5\delta - 3)/2}}|\bar{h}|^2
		+ \frac{N\rho}{t^{(1+2\delta)/3}}\left(1+ t^{(1-\delta)/3}\right)|S|^2 
\end{split}\\
\label{eq:smod1}
  &\;\;\leq \frac{N\rho}{t^{(1+2\delta)/3}}\left(|\bar h|^2 + |\bar A|^2 + |S|^2\right)
\end{align}
since $(2+4\delta)/3 - \beta = (1+5\delta)/6$, $3\delta -\alpha = (5\delta -3)/2$,
and
\[
  \frac{5\delta -3}{2} \leq \frac{1+ 5\delta}{6} \leq \frac{1+2\delta}{3},
\]
when $\delta\in [0, 1]$.

Finally, from \eqref{eq:uraw}, we have
\begin{align}
\nonumber
      |U|^2 &\leq C\left(|\gt^{-1}|^2|\nabt\Rt|^2|h|^2 + |\Rt|^2|A|^2\right)
\leq N\rho\left(\frac{1}{t^{1+2\delta - \alpha}}|\bar{h}|^2 + \frac{1}{t^{2\delta - \beta}}|\bar{A}|^2\right)\\
\label{eq:umod1}	  
       &\leq \frac{N\rho}{t^{(3\delta -1)/2}}\left(|\bar{h}|^2 + |\bar{A}|^2\right),
\end{align}
since $1+2\delta - \alpha = 2\delta - \beta = (3\delta -1)/2$.  Since 
$\sigma = (3+\delta)/4$ is the largest exponent of $1/t$ to appear in the coefficients
of 
\eqref{eq:hmod1}, \eqref{eq:amod1}, \eqref{eq:smod1}, and \eqref{eq:umod1}, we obtain
\eqref{eq:hmod}, \eqref{eq:amod}, \eqref{eq:smod}, and \eqref{eq:umod}. 
\end{proof}

\subsection{A remark on the general strategy}

At this point, we could multiply $|\bar h|^2$, $|\bar{A}|^2$, and $|S|^2$ by suitable cutoff and
decay functions and introduce localized versions of the integral quantities 
$\mc{G}$, $\mc{H}$, $\mc{I}$, and $\mc{J}$ from the last section in order to argue as before. However,
the argument we will use is only dependent on the structure of the system of inequalities
in Proposition \ref{prop:refineq} and this structure is not really specific to the Ricci flow.

The guiding principle behind our efforts thus far has been that, 
while $g$ and $\gt$ themselves do not satisfy strictly parabolic
equations, the individual curvature tensors $R$ and $\Rt$ do, and do so with respect to elliptic operators
$\Delta$ and $\widetilde{\Delta}$ whose coefficients, respectively, depend on $g$ and $\gt$ and their derivatives (up to second-order).
Since the difference of $g$ and $\gt$ (and those of their derivatives) can be controlled in an essentially ordinary-differential way
by the difference of $R$ and $\Rt$ and those of their derivatives,  
by prolonging the system for $S = R - \Rt$ to include just as many of the differences of $g$ and $\gt$
and their derivatives as are needed to control $\Delta - \widetilde{\Delta}$, we can hope to obtain a closed
and nearly parabolic system of inequalities.  As we have 
seen (thanks to an integration by parts) we need only to add $h$ and $A$ to obtain
a system for which uniqueness can be established much as for strictly parabolic systems. 

Thus the strategy is, first, to treat the curvature tensors (rather than the metrics)
as the central objects in the problem, second, to prolong the system 
by lower order quantities whose vanishing is somewhat logically redundant for the purposes of establishing uniqueness,
but whose inclusion allows us to account for the lack of a common elliptic operator in the separate
parabolic equations satisfied by the curvature tensors, and, third, to attempt to apply the energy method
to the mixed (but, in practice, nearly parabolic) system of inequalities satisfied by the aggregate of the curvature
and lower-order quantities. This strategy can be used to encode uniqueness problems for other geometric evolution
equations, such as the mean curvature flow, into those for similar systems of inequalities,
and thus we formulate a somewhat more general uniqueness result than is necessary to prove Theorem \ref{thm:quadgrowth}
in the chance that it might be of some independent interest.

This approach to handling gauge-degeneracies in evolution equations involving curvature, 
is similar to that employed in \cite{KotschwarBackwards}, \cite{KotschwarHolonomy}, and has its origins in the work of Alexakis \cite{Alexakis}
on unique-continuation for the vacuum Einstein equations. (See also \cite{WongYu}.)

\subsection{Reduction to system of mixed differential inequalities}

For the application of the result of the next section to the situation of  Theorem \ref{thm:quadgrowth},
we'll take (in the notation of that section)
 $\mc{X} = T^1_{3}(M)$, $\mc{Y} = T_{2}^0(M)\oplus T_{2}^1(M)$ and $X(t) = S(t)\in \mc{X}$,
$Y(t) = \bar{h}(t)\oplus \bar{A}(t)\in \mc{Y}$, and take one of the metrics, $g(t)$, as our family of reference metrics.
Note that, by assumption, $X$ is smooth on $M\times [0, T]$ and  satisfies $X(x, 0) \equiv 0$ and
\[
  |X(x,t)|^2\leq K_1t^{-2\delta}(r_0^2(x)+1)^2 \leq Nt^{-\sigma^{\prime}}e^{Nr_0^2(x)} 
\]
on $M\times (0, T]$ for an appropriate constant $N$ where 
\[
\sigma^{\prime} \dfn \operatorname{max}\{(3+\delta)/4, 2\delta\} < 1.
\]
(It is here that we need $\delta < 1/2$, as opposed to $\delta < 1$.)
The family of sections $Y(t)$ is smooth in $t$ and Lipschitz over $M$ (smooth but for the factors of $\rho$) for $t > 0$, 
and, as noted in Lemma \ref{lem:hadecay2}, satisfies
\[
      |Y(x, t)|^2 = t^{-\alpha}\rho^2(x)|h(x, t)|^2 + t^{-\beta}\rho|A(x, t)|^2 \leq t^{2-\alpha}(r^2+1)^2P(r)
\]
on $B_{g_0}(x_0, r)\times[0, T]$ for any $r > 0$. Thus $|Y(x, t)|$ tends to zero uniformly as $t\searrow 0$ on any compact set. (We will not need 
any assumptions on the behavior of $Y(t)$ at spatial infinity.)  In terms of $X$ and $Y$ (and the norms on $\mc{X}$ and $\mc{Y}$
induced by $g(t)$), Proposition \ref{prop:refineq} implies
\begin{align*}
	  \left\langle \pd{X}{t} - \Delta X - \operatorname{div} U, X \right\rangle & \leq \frac{N\rho}{t^{\sigma^{\prime}}}(|X|^2 + |Y|^2),\\
	  \left\langle \pd{Y}{t}, Y\right\rangle &\leq + \frac{1}{4}|\nabla X|^2 + \frac{N\rho}{t^{\sigma^{\prime}}}(|X|^2 + |Y|^2),
\end{align*}
on $M\times (0, T]$, with $|U|^2\leq (N\rho/t^{\sigma^{\prime}})(|X|^2 + |Y|^2)$.  Note that, although equations \eqref{eq:hmod} and \eqref{eq:amod}
 only directly imply an inequality on $\pdt|Y|^2$, in our derivation of these inequalities, we immediately estimated
the contribution of the time-derivative of $|\cdot|=|\cdot|_{g(t)}$ from above by terms proportional to 
$|R||Y|^2 \leq CK_1(\rho/t^{\delta})|Y|^2 \leq N(\rho/t^{\sigma^{\prime}})|Y|^2$,
so we in fact also have the inequalities in the  above weaker (although somewhat more symmetric) form.  Theorem \ref{thm:quadgrowth} now
follows at once from Theorem \ref{thm:ext} below.

\section{A uniqueness theorem for systems of virtually parabolic differential inequalities.}

Let $(M, g_0)$ be a complete Riemannian manifold,
satisfying
\begin{equation}\label{eq:vol}
 \operatorname{vol}(B_{g_0}(x_0, r)) \leq A_0e^{A_0r^2},
\end{equation}
for some $x_0\in M$, constant $A_0$, and all $r > 0$. Define $r_0(x) \dfn \operatorname{dist}_{g_0}(x_0, x)$ and $\rho(x)\dfn r_0^2(x)+1$
as before.  Suppose that $g(t)$ is a smooth family of metrics on $M\times [0, T]$ such that, writing $\pdt g_{ij} = -2p_{ij}$, 
the conditions
\begin{equation}\label{eq:gassumptions}
   \gamma^{-1} g_0 \leq g(t) \leq \gamma g_0, \quad\mbox{and}\quad t^{-\sigma}|p(x, t)| \leq N_0\rho(x)
\end{equation}
are satisfied
for some constants $\gamma$, $\sigma\in (0, 1)$, and $N_0$, where $|\cdot| \dfn |\cdot|_{g(t)}$ as before.

Now let $\mc{X} = \bigoplus_{i=1}^rT^{k_i}_{l_i}(M)$ and $\mc{Y} = \bigoplus_{i=1}^{r^{\prime}}T^{k^{\prime}_i}_{l^{\prime}_i}(M)$
represent tensor bundles over $M$ equipped with the metrics and connections induced by $g(t)$ and $\nabla(t)$, the Levi-Civita connection of $g(t)$. 
\begin{theorem}\label{thm:ext} For any choice of $a$, $\sigma \in (0, 1)$,  $\gamma > 0$, and nonnegative constants $A_0$, $A_1$, $N_0$, and $N_1$,
there exists $T_0 = T_0(n, \gamma,\sigma, a, A_0, A_1, N_0, N_1) > 0$, such that whenever $g(t)$ is a smooth family of metrics
on $M\times [0, T_0]$ satisfying \eqref{eq:vol} and \eqref{eq:gassumptions}, and $X(t) \in C^{\infty}(\mc{X})$, $Y(t)\in C(\mc{Y})$ are families of sections of
depending smoothly on $t\in (0, T_0]$, that satisfy
\begin{equation}
    \lim_{t\searrow 0}\sup_{x\in \Omega}|X(x, t)| = 0, \quad \lim_{t\searrow 0}\sup_{x\in \Omega}|Y(x, t)| =0
\end{equation}
on every compact $\Omega \subset M$, the growth bound
\begin{equation}\label{eq:xygrowth}
  t^{\sigma}|X(x, t)|^2 \leq A_1e^{A_1r_0^2(x)},
\end{equation}
on $M\times (0, T_0]$, and the system of inequalities
\begin{align}\label{eq:exteqxy}\begin{split}
    \left\langle\pd{X}{t} - \Delta X - \operatorname{div}U, X\right\rangle 
&\leq \frac{a}{2}|\nabla X|^2+ \frac{N_1\rho}{t^{\sigma}}\left(|X|^2 +  |Y|^2\right),\\  
    \left\langle\pd{Y}{t}, Y\right\rangle &\leq \frac{a}{2}|\nabla X|^2 + \frac{N_1\rho}{t^{\sigma}}\left(|X|^2 + |Y|^2\right),
\end{split}
\end{align}
on $M\times (0, T_0]$ for
 $U(t)\in C^{\infty}(TM\otimes\mc{X})$ satisfying
\begin{equation}\label{eq:uest}
|U|^2 \leq \frac{N_1\rho}{t^{\sigma}}\left(|X|^2 + |Y|^2\right),
\end{equation}
 then $X(t)\equiv 0$, $Y(t)\equiv 0$ for all $t\in (0, T_0]$. 
\end{theorem}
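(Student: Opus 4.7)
My strategy is to extend the energy argument of Section 3 to the abstract setting. I would define the weighted $L^2$ energy
\[
 \mathcal{E}(t) = \int_M \left( |X|^2 + |Y|^2 \right) e^{-\eta} \, d\mu_{g(t)},
\]
where $\eta$ is the Gaussian-type cutoff furnished by Lemma \ref{lem:cutoffgrowth} applied to $\bar{g}(t) = g(t)$, with parameters $L_1$, $L_2$ chosen large: $L_1$ (depending on $a$) so that the gradient cross-terms arising from integration by parts can be absorbed via the inequality $\partial_t \eta \geq L_1 |\nabla \eta|^2$; and $L_2$ (depending on $A_0$, $A_1$) so that $\rho(x) e^{A_1 r_0^2(x) - \eta(x,t)}$ is integrable against $d\mu_{g(t)}$ uniformly in $t \in [0, T_0]$, using \eqref{eq:vol} and the uniform equivalence of $g(t)$ and $g_0$. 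Shrinking $T_0$ if necessary, such an $\eta$ exists on $M\times[0,T_0]$.

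The main calculation is to differentiate $\mathcal{E}(t)$ under the integral sign on $(0, T_0]$. The measure-derivative $\partial_t d\mu_{g(t)} = -\operatorname{tr}_g p \, d\mu_{g(t)}$ is estimated by $N_0\rho/t^{\sigma}$ via \eqref{eq:gassumptions}; the differential inequalities \eqref{eq:exteqxy} reduce the contributions of $\partial_t |X|^2$ and $\partial_t |Y|^2$ to the controllable zeroth-order pieces plus $\langle \Delta X + \operatorname{div} U, X\rangle$ and $a|\nabla X|^2$. Integrating $\Delta X$ and $\operatorname{div} U$ by parts against the exponential weight (no boundary terms, by its Gaussian decay) produces $-|\nabla X|^2$ together with cross-terms involving $\nabla \eta$; Cauchy-Schwarz together with \eqref{eq:uest} reorganizes everything into a nonpositive multiple of $|\nabla X|^2$ (possible because $a < 1$), a multiple of $|\nabla\eta|^2|X|^2$ absorbed via $L_1 |\nabla \eta|^2 \leq \partial_t \eta$, and a remainder pointwise bounded by $(N\rho/t^\sigma)(|X|^2 + |Y|^2)$. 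Integrating, this yields
\[
 \mathcal{E}'(t) \leq \frac{N}{t^\sigma} \mathcal{E}(t) \quad \text{on } (0, T_0],
\]
for an $N$ depending only on the structural constants $n, \gamma, \sigma, a, A_0, A_1, N_0, N_1$. Since $\sigma < 1$, $t^{-\sigma}$ is integrable near $0$, so Gr\"onwall gives $\mathcal{E}(t) \leq \mathcal{E}(s) \exp\bigl(N(t^{1-\sigma} - s^{1-\sigma})/(1-\sigma)\bigr)$ for $0 < s \leq t \leq T_0$. Sending $s \searrow 0$ along a sequence with $\mathcal{E}(s_k)\to \liminf_{s\searrow 0}\mathcal{E}(s) = 0$ then forces $\mathcal{E}(t) \equiv 0$, so that $X \equiv Y \equiv 0$.

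The principal obstacle is the verification that $\liminf_{s\searrow 0}\mathcal{E}(s) = 0$. The growth bound \eqref{eq:xygrowth} on $|X|^2$ blows up like $t^{-\sigma}$ at the initial time and so does not supply a time-independent integrable majorant for a direct application of dominated convergence. The remedy is a two-scale split: for a fixed $R$, the local uniform decay $\sup_{B_{g_0}(x_0, R)}|X|, \sup_{B_{g_0}(x_0, R)}|Y| \to 0$ handles the interior integral (the relevant weighted volume being finite); on the complement, taking $T_0$ small enough that $L_2 \gg A_1$ makes the Gaussian margin $\int_{M\setminus B_{g_0}(x_0, R)} e^{A_1 r_0^2 - \eta}\, d\mu_{g(t)}$ decay superpolynomially in $R$, which, together with a balancing $R = R(t) \to \infty$ as $t \searrow 0$, yields the desired vanishing. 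A secondary technical point is the treatment of $Y$, which is not furnished with any explicit spatial growth bound; this can be bypassed by inserting a spatial cutoff $\phi_R$ into the definition of $\mathcal{E}$, running the argument for the localized energy (now finite for trivial reasons), controlling the error terms produced by $|\nabla\phi_R|$ using the local uniform bounds on $|X|^2 + |Y|^2$ guaranteed for each $t > 0$, and finally sending $R \to \infty$.
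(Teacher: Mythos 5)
There is a genuine gap, and it sits at the center of your main computation. Your primary route defines the global energy $\mathcal{E}(t)=\int_M(|X|^2+|Y|^2)e^{-\eta}\,d\mu$ and asserts $\mathcal{E}'(t)\leq N t^{-\sigma}\mathcal{E}(t)$ with $N$ a structural constant. This cannot be extracted from \eqref{eq:exteqxy}: the zeroth-order coefficient there is $N_1\rho/t^{\sigma}$ with $\rho(x)=r_0^2(x)+1$ unbounded on $M$, so the pointwise remainder $(N\rho/t^{\sigma})(|X|^2+|Y|^2)$ is not dominated by any constant multiple of the integrand of $\mathcal{E}$. Choosing $L_2$ large only makes $\rho\,e^{A_1r_0^2-\eta}$ integrable, which would at best yield an inhomogeneous term of size $t^{-2\sigma}$ for the $X$-part, and it gives nothing for the $Y$-part, since $Y$ carries no spatial growth hypothesis at all (indeed $\mathcal{E}$ itself need not even be finite). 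So the displayed differential inequality, and the Gr\"onwall step built on it, do not follow.

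Your fallback---inserting a cutoff $\phi_R$, running the localized argument, and letting $R\to\infty$---is in fact the paper's actual proof (localized energies $\mathcal{G}_r,\mathcal{H}_r$ weighted by $\phi_r e^{-\eta}$), but the step you pass over is precisely the delicate one. On $\operatorname{supp}\phi_R$ one only has $\rho\leq R^2+1$, so the localized Gr\"onwall coefficient is $N(R^2+1)/t^{\sigma}$ and integration amplifies the localized energy by $e^{N(R^2+1)t^{1-\sigma}/(1-\sigma)}$. Meanwhile the error produced by $|\nabla\phi_R|$ lives on the annulus $B_{g_0}(x_0,R)\setminus B_{g_0}(x_0,R/2)$, where the only available control on $|X|^2$ is the growth bound \eqref{eq:xygrowth} (there is no decaying ``local uniform bound'' out there); combining it with \eqref{eq:vol} and the Gaussian weight gives an error of size $R^{-2}t^{-\sigma}e^{-\epsilon R^2/T_0}$, and only after $T_0$ is taken small enough that the weight's decay rate beats $A_0+A_1$. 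Sending $R\to\infty$ then succeeds only if $e^{-\epsilon R^2/T_0}$ also beats the amplification $e^{NR^2T_0^{1-\sigma}/(1-\sigma)}$, i.e.\ only under a second smallness condition of the form $\epsilon/T_0\geq 2NT_0^{1-\sigma}/(1-\sigma)$; this balancing is exactly where the theorem's $T_0(n,\gamma,\sigma,a,A_0,A_1,N_0,N_1)$ comes from, and it is absent from your argument. A further point: when integrating $\Delta X+\operatorname{div}U$ by parts against $\phi_Re^{-\eta}$, the cross-terms must be arranged (as in the paper) so that only $|X|^2|\nabla\phi_R|^2/\phi_R$ survives on the annulus, with the $|U|$ and $|\nabla X|$ contributions reabsorbed against the weight $\phi_R$ using \eqref{eq:uest}; otherwise the missing growth bound on $Y$ re-enters through $|U|$. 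As written, your proposal does not close.
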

\begin{remark}
    Here, by $\operatorname{div} U = \operatorname{div}_{g(t)}U$ we mean the section of $\mc{X}$ whose value in the fiber over $(x, t) \in M\times (0, T_0]$ 
is $\sum_{i=1}^{n}\nabla_{e_i}U(e_i, \cdot)$ for a $g(t)$-orthonormal basis $\{e_i\}_{i=1}^n$
of $T_xM$.
\end{remark}
\begin{remark}
 With a simple modification of the proof below (and an appropriate reduction of the constant $a$ in the statement)
 one can substitute for the operator $\Delta = \Delta_{g(t)}$ in Theorem \ref{thm:ext}
 any elliptic operator of the form $\mc{L} = \Lambda^{ij}\nabla_i\nabla_j$ where $\Lambda(t)\in C^{\infty}(T^2_0{M})$ satisfies
 $\lambda^{-1}g^{ij}(x, t)\leq \Lambda^{ij}(x, t)\leq \lambda g^{ij}(x, t)$ on $M\times (0, T_0]$
 with 
  \[
  |\nabla \Lambda|^2\leq N\rho/t^{\sigma}
  \]
for some constants $\lambda$ and $N$.
\end{remark}

\begin{proof}  Again, we'll assume that $M$ is non-compact, as the argument in the compact case is very similar and less involved.
For the time being we will take $0<T_0\leq T$ to be a small constant to be determined later.
We begin by introducing a suitable cutoff function.  Choose a nonincreasing $\psi\in C^{\infty}(\RR{}, [0, 1])$ satisfying
\begin{equation*}
 \left\{\begin{array}{ll}
    \psi\equiv 1 &\mbox{on}\quad (-\infty, 1/2]\\
    \psi\equiv 0 &\mbox{on}\quad [1, \infty)
 \end{array}\right.
\end{equation*}
and $(\psi^{\prime})^2 \leq C\psi$.  The function $\phi_{r}: M\to [0, 1]$ defined by
  $\phi_{r}(x) = \psi(r_0(x)/r)$ then satisfies
\[
  \phi_{r}\equiv 1 \quad\mbox{on}\quad B_{g_0}(x_0, r/2),\quad \phi_{r}\equiv 0 
    \quad\mbox{on}\quad M\setminus B_{g_0}(x_0, r),
\]
and is Lipschitz (smooth off of the $g_0$-cut locus of $x_0$). On account of the uniform equivalence of $g(t)$
with $g_0$, we have
\[
      |\nabla\phi_{r}|^2 \leq C\gamma r^{-2}\phi_r 
\]
off of a $d\mu_{g_0}$- (hence $d\mu_{g(t)}$-) set of measure zero in $B_{g_0}(x_0, r) \times [0, T_0]$.

Then, for any $r > 0$ and $t > 0$, we define
\[
    \mc{G}_{r} \dfn \int_{M}|X|^2\phi_re^{-\eta}\,d\mu, \quad\mc{H}_{r} \dfn \int_{M}|Y|^2\phi_re^{-\eta}\,d\mu,\quad
 \mc{J}_{r} \dfn \int_{M}|\nabla X|^2 \phi_re^{-\eta}\,d\mu,  
\]
with $\mc{E}_r \dfn \mc{G}_r + \mc{H}_{r}$, where $\eta = \eta_{B, T_0}$ is as in Lemma \ref{lem:cutoffgrowth}
with $B = B(n, a, \gamma) > 0$ taken small enough to ensure
that
\begin{equation}\label{eq:etaeq}
  \pd{\eta}{t} - \frac{5-2a}{2(1-a)}|\nabla\eta|^2\geq 0
\end{equation}
on $M\times [0, T_0]$. As noted in that lemma, this can be achieved independently of our choice of $T_0$,
and is not affected by a further reduction of $T_0$.
Below we will continue to use $C = C(n)$ to denote a universal constant and $N$ any constant which depends at most on $n$, the ranks $(k_i, l_i)$,
$(k^{\prime}_i, l^{\prime}_i)$ from the definitions of $\mc{X}$ and $\mc{Y}$, and the constants $a$, $\gamma$, $\sigma$, $A_0$, $A_1$, 
$N_0$, and $N_1$. For convenience,
we'll write $\theta  \dfn \theta(x, t)\dfn \rho(x)/t^{\sigma}$.

Now we compute the evolution equations for $\mc{G}_r$ and $\mc{H}_r$. First, since 
$\pdt d\mu = -g^{ij}P_{ij} d\mu$, taking into account the time-dependency of $d\mu$ and the norms $|\cdot|$,
it follows from \eqref{eq:gassumptions} and \eqref{eq:exteqxy} that
 \begin{align}\nonumber
     \mc{G}_r^{\prime}(t) &\leq \int_{M}\left(N\theta|X|^2 + 2\left\langle\pd{X}{t}, X\right\rangle
	  - \pd{\eta}{t}|X|^2\right)\phi_r e^{-\eta}\,d\mu\\
   \begin{split}\nonumber
 	&\leq \frac{N(r^2+1)}{t^{\sigma}}\mc{G}_r 
	+ \int_{M}\left(2\left\langle \Delta X +\operatorname{div}U, X\right\rangle - \pd{\eta}{t}|X|^2\right)\phi_re^{-\eta}
 \,d\mu \\
	   &\phantom{\leq}\quad + \int_{M} \left(2N_1\theta(|X|^2 + |Y|^2) + a|\nabla X|^2\right) \phi_re^{-\eta}\,d\mu
   \end{split}\\
  \begin{split}\label{eq:geq1}
 	&\leq a\mc{K}_r+ \frac{N(r^2+1)}{t^{\sigma}}\left(\mc{G}_r + \mc{H}_r\right)  -\int_M\pd{\eta}{t}|X|^2\phi_re^{-\eta}\,d\mu \\
       &\phantom{\leq}\quad+2\int_{M}\left\langle \Delta X +\operatorname{div}U, X\right\rangle \phi_re^{-\eta}\,d\mu
   \end{split}
   \end{align}
on $M\times (0, T_0]$.
Integrating by parts in the last term in \eqref{eq:geq1}, we find that
\begin{align*}
& 2\int_{M}\left\langle \Delta X +\operatorname{div}U, X\right\rangle \phi_re^{-\eta}\,d\mu\\
 &\quad\leq -2\mc{K}_r + 2\int_{M}\bigg(|\nabla X||U|\phi_r + (|\nabla X||X| + |U||X|)(|\nabla\eta|\phi_r +|\nabla\phi_r|)\bigg)e^{-\eta}\,d\mu,
\end{align*}
and, where $\phi_r > 0$, we can estimate
\begin{align*}\begin{split}
 &2|\nabla X|(|U|\phi_r + |X||\nabla\eta|\phi_r + |X||\nabla\phi_r|)\\
  &\qquad \leq 2(1-a)|\nabla X|^2\phi_r 
  + \frac{3}{2(1-a)}|U|^2\phi_r + \frac{3}{2(1-a)}\left(|\nabla\eta|^2\phi_r+ \frac{|\nabla\phi_r|^2}{\phi_r}\right)|X|^2,
\end{split}
\end{align*}
and
\[
 2|U||X|(|\nabla\eta|\phi_r +|\nabla\phi_r|)\leq 2|U|^2\phi_r + \left(|\nabla\eta|^2\phi_r+\frac{|\nabla\phi_r|^2}{\phi_r}\right)|X|^2.
\]
So, using \eqref{eq:uest} and \eqref{eq:etaeq}, we have
\begin{align}\nonumber
& \int_{M}\left(2\left\langle \Delta X +\operatorname{div}U, X\right\rangle -\pd{\eta}{t}\right) \phi_re^{-\eta}\,d\mu\\
\begin{split}\nonumber
&\quad\leq -2a\mc{K}_r  + N\int_{M}\left(|U|^2 +\frac{5-2a}{2(1-a)}|\nabla \eta|^2 -\pd{\eta}{t}\right)\phi_{r}e^{-\eta}d\mu\\ 
&\quad\phantom{\leq -2a\mc{K}_r\;} + N\int_{\operatorname{supp}{\phi_r}}|X|^2\frac{|\nabla\phi_r|^2}{\phi_{r}}e^{-\eta}d\mu
\end{split}\\
\label{eq:geq2}
&\quad\leq   -2a\mc{K}_r +\frac{N(r^2 + 1)}{t^{\sigma}}(\mc{G}_{r} + \mc{H}_r)  + \frac{N}{r^{2}}\int_{\operatorname{supp}{|\nabla\phi_r|}}|X|^2e^{-\eta}d\mu,
\end{align}
and thus, combining \eqref{eq:geq1} and \eqref{eq:geq2}, that, for any $0< t \leq T_0$,
\begin{align}\label{eq:geq3}
 \mc{G}_r^{\prime} &\leq -a\mc{K}_r +\frac{N(r^2+1)}{t^{\sigma}}\mc{E}_r  + \frac{N}{r^2}\int_{A(x_0, r, r/2)}|X|^2e^{-\eta}d\mu,
\end{align}
where $A(x_0, r, r/2) \dfn B_{g(0)}(x_0, r)\setminus B_{g(0)}(x_0, r/2)$. 

Now we examine the last term in \eqref{eq:geq3}. Since the metric $g(t)$ is uniformly equivalent to $g_0$, we have 
\[
\operatorname{vol}_{g(t)}(A(x_0, r, r/2))\leq \operatorname{vol}_{g(t)}(B_{g_0}(x_0, r)) \leq \gamma^{n/2}A_0e^{A_0r^2},
\]
by the volume growth assumption on $g_0$. Since we also assume that the integrand $|X|^2$ satisfies the 
similar growth bound \eqref{eq:xygrowth}, by choosing $T_0^{\prime} = T_0^{\prime}(n,\gamma, A_0, A_1, B)$ sufficiently small,
we can arrange that
\begin{align*}
 \int_{A(x_0, r, r/2)}|X|^2e^{-\eta}d\mu \leq
 \frac{N}{t^{\sigma}}e^{-\frac{\epsilon r^2}{T_0}},
\end{align*}
for some $\epsilon = \epsilon(A_0, A_1, B) > 0$, provided  $T_0 \leq T_0^{\prime}$.
So we have 
\begin{equation}\label{eq:geq4}
 \mc{G}_r^{\prime}(t) \leq \frac{N(r^2+1)}{t^{\sigma}}(\mc{G}_r + \mc{H}_r) -a\mc{K}_r + \frac{N}{t^{\sigma}r^2}e^{-\frac{\epsilon r^2}{T_0}}
\end{equation}
for any $r > 0$ and $t\in (0, T_0]$, if $T_0 \leq T_0^{\prime}$.

Similarly, by \eqref{eq:exteqxy}, we compute (using here only that $\pd{\eta}{t} \geq 0$) that
\begin{align}
  \nonumber
    \mc{H}_r^{\prime}(t)
      &\leq \int_{M}\left(N\theta|Y|^2 + 2\left\langle\pd{Y}{t}, Y\right\rangle - \pd{\eta}{t}|X|^2\right)\phi_re^{-\eta}\,d\mu\\
  \nonumber
       &\leq \frac{N(r^2+1)}{t^{\sigma}}\mc{H}_r + \int_{M}\left(2N_1\theta(|X|^2 + |Y|^2) + a|\nabla X|^2\right)\phi_re^{-\eta}\,d\mu\\
  \label{eq:heq1}   
    &\leq a\mc{K}_r+ \frac{N(r^2+1)}{t^{\sigma}}(\mc{G}_r +\mc{H}_r).
\end{align}

Combining \eqref{eq:geq4} and \eqref{eq:heq1}, we conclude that, for all $r > 0$ and $t\in (0, T_0]$,  
\[
  \mc{E}_r^{\prime}(t) \leq \frac{N(r^2+1)}{t^{\sigma}}\mc{E}_{r}(t) + \frac{N}{t^{\sigma}r^{2}}e^{-\frac{\epsilon r^2}{T_0}}.
\]
provided $T_0 \leq T_0^{\prime}$.
It follows then, that for any $0 < t_0 < t \leq T_0 \leq T_0^{\prime}$, we have
\begin{align*}
  &e^{-Q(r)t^{1-\sigma}}\mc{E}_r(t) - e^{-Q(r)t_0^{1-\sigma}}\mc{E}_r(t_0)
      \leq \frac{e^{-\frac{\epsilon r^2}{T_0}}}{r^2(r^2+1)}\left(e^{-Q(r)t_0^{1-\sigma}} - e^{-Q(r)t^{1-\sigma}}\right),
\end{align*}
where $Q(r) \dfn N(r^2+1)/(1-\sigma)$.

Now, since $X$ and $Y$ tend to zero uniformly on any compact set, we have $\lim_{t_0\searrow 0} \mc{E}_r(t_0) = 0$ for any fixed $r$. 
Therefore, sending $t_0\searrow 0$, we obtain
\begin{align*}
  \mc{E}_r(t)\leq \frac{e^{-\frac{\epsilon r^2}{T_0}}}{r^2}\left(e^{Q(r)t^{1-\sigma}} -  1\right)
	  &\leq \frac{e^{\frac{NT_0^{1-\sigma}}{1-\sigma}}}{r^2}e^{-\left(\frac{\epsilon}{T_0} - \frac{NT_0^{1-\sigma}}{1-\sigma}\right)r^2}.
\end{align*}
If we choose $T_0$ smaller still, say $T_0 \leq \min\{T_0^{\prime}, (\epsilon(1-\sigma)/(2N))^{1/(2-\sigma)}\}$, the above inequality implies
\[
 \mc{E}_r(t) \leq \frac{e^{\frac{NT_0^{1-\sigma}}{1-\sigma}}e^{-\frac{\epsilon r^2}{2T_0}}}{r^2} \leq \frac{N}{r^2}e^{-\frac{\epsilon r^2}{2T_0}}
\]
for all $r > 0$ and $0 < t \leq T_0$. Fixing $t$ in this range and sending $r\to\infty$ then finishes the argument.
\end{proof}

\end{document}